\def\cal{\mathcal}
\def\Box{\hbox{$\sqcap \unskip \kern -6.5pt 
\sqcup$}}
\newtheorem{lemma}[subsection]{Lemma}
\newtheorem{corollary}[subsection]{Corollary}
\newenvironment{ulemma}{\par\noindent\textbf{Lemma}\,\,\em}{\rm}
\newenvironment{utheorem}{\par\noindent\textbf{Theorem}\,\,\em}{\rm}
\numberwithin{equation}{subsection}
\title{A connected  string of long thick and dominants}
\author{Mary Rees}
\begin{document}
\maketitle
\begin{abstract}
We prove that every Teichmuller geodesic of a finite type surface contains a string of intersecting {\em{long, thick and dominant}} segments, such that the distance between consecutive segments is bounded. This is key to obtaining some results about Teichm\"uller geodesics which mimic those for hyperbolic geodesics. These results have important applications to results about the geometry of hyperbolic three-manifolds.


\end{abstract}

\section{Introduction}

Teichm\"uller geodesics of finite type surfaces are very interesting objects. They are interesting in their own right, but their properties also have important applications. Two areas of applications come  to mind. One area is dynamics, especially  the dynamics of measured foliations, geodesic laminations and interval exchanges --- and, of course the Teichm\"uller geodesic flow itself, and the dynamics of the mapping class group action on various boundaries --- but the properties of Teichm\"uller geodesics do have dynamical implications at a much more  basic level. The other application is to topology and geometry. Teichm\"uller space and Teichm\"uller geodesics were used to study the topology and geometry of spaces of critically finite branched coverings in \cite{R1}. Teichm\"uller geodesics have also been used to study the geometry of hyperbolic three-manifolds, especially those with finitely generated fundamental groups. The links between these two general areas of applications -- to dynamics, and to  the topology and geometry of particular spaces --- are very strong. 

In some respects, properties of Teichm\"uller geodesics mimic properties of hyperbolic geodesics. Indeed, the Teichm\"uller space for the torus with at most one puncture, or for a sphere with four punctures, is the hyperbolic plane, and the geodesics are the usual hyperbolic geodesics. For all higher type surfaces, the Margulis decomposition of a hyperbolic surface into ``thick'' and ``thin'' parts induces an approximate product metric structure on the corresponding ``thin'' parts of Teichm\"uller space, which conflicts with properties related to hyperbolic space. There are various strategies for dealing with this. One is to consider another space altogether, which has stronger hyperbolic properties, such as the curve complex, which is Gromov hyperbolic \cite{M-M1,M-M2, Bow}. Another strategy is to work directly with Teichm\"uller space, and to project to suitable coordinates to get hyperbolic properties. It is this strategy which is followed with the theory of ``long thick and dominant'' pieces. This theory was originally developed in \cite{R1}. The development there was for punctured spheres --- simply because of the application for which it was intended. In fact, the theory works equally well for any finite type surface . A related theory, was developed by Rafi, in his thesis \cite{Raf}, with application to hyperbolic three-manifolds in mind.  This theory has since been used extensively for example \cite{Raf2, Raf3, Raf4}. 

The purpose of this article is to present a result (\ref{7.13}) about the long thick and dominant pieces on Teichm\"uller geodesics. Visually, the result is as follows. A Teichm\"uller geodesic is a path through hyperbolic surfaces of a fixed finite type. Each surface on the path has a Margulis decomposition into thick and thin subsurfaces. On each surface, there is either at least one ``thick'' piece, which is ``dominant'' -- which can loosely be taken to mean that the Teichm\"uller distance is moving on this surface at about the same rate as the distance on the whole surface --- or there is an annulus of large modulus on which the metric, induced by the quadratic differential associated to the quadratic differential, is approximately Euclidean -- in which case the metric distance on this annulus is, once again, moving at approximately the rate of the distance on the whole surface. In each case, we call this subsurface ``thick'' ---although an annulus of large modulus is in the thin part of the surface.  For suitable parameters, these thick subsurfaces persist for some time along the geodesic. If they persist for a time which is regarded as sufficiently long, we call them {\em{long, thick and dominant pieces}}. One of the basic results of \cite{R1} was that long, thick and dominant pieces do exist, in any sufficiently long Teichm\"uller geodesic.   If one were to make a three-dimensional model of the geodesic, out of wood, say, then these long thick and dominant pieces would look like chunky beads strung out along a necklace, with the cross-section of each bead in the shape of some subsurface. These beads could be very large. If a geodesic lies entirely in the thick part of Teichm\"uller space, then the model has a single bead, with cross-section in the shape of the whole surface, and running along the entire length of the necklace. But such geodesics are highly untypical. Precise quantification of the numbers and thicknesses of beads that one expects is not easy, but one certainly expects the number of beads to grow with the length of the geodesic, and that some of the beads will get more chunky with increasing length. In a broad sense, there is a huge and important literature related to this topic. The current purpose is concerned, more basically, with a property which holds for all geodesics. We will show that,  for any geodesic, or rather, for the necklace representation of any geodesic, there is a collection of beads, which cannot be moved more than a fixed length along the string without clashing against another bead in the collection. This means that the cross-sections of the  adjacent beads in the collection intersect. It is then a consequence of the long, thick and dominant properties that the cross-sections of any two beads in the collection intersect.

It is natural to expect that this property of the necklace implies a certain rigidity about manifestations of paths in Teichm\"uller space which might not be geodesic in the strict sense, but have some related properties: quasi-geodesic, perhaps. Such manifestations occur in any hyperbolic manifold of dimension three with finitely generated fundamental group, travelling out from the core to any one of the ends. The beads in this case can be smoothed out of pleated surfaces. Topologically, the same structure appears in a Teichmuller geodesic. Because of the immovability of the beads, more than a certain distance, the geometric structure on the two collections of  beads, up to bounded distortion, is the same: one in the three-manifold, one in the Teichm\"uller geodesic of surfaces. So a model of part of the three-manifold is obtained, in terms of the geometry of the Teichm\"uller geodesic. By an inductive procedure, the model can be extended to the whole manifold. Ultimately, the geometric structure of the hyperbolic three-manifold can be completely described, up to bounded distortion. A proof of the Ending Laminations Theorem follows. This proof can be found in \cite{R4}. 

The proof of \ref{7.13} given in \cite{R4} does have some errors, but the spirit of the proof is unchanged.
The proof is surprisingly difficult -- or, at least, it has elements which have not occurred in other related results, like the simple existence of long thick and dominants. Roughly speaking, rather than looking for a set of positive measure on a surface, we look for a small interval on a surface --- reducing to a point in the limit --- arising as an intersection of a decreasing sequence of intervals. By the same method we could produce a Cantor set of zero measure rather than a point, but not a set of positive measure. It is not clear why the proof is so difficult, but the reason could be significant, because the corresponding result for curve complexes --- the existence of a tight geodesic --- is comparatively trivial. 

\section{Teichmuller space.}\label{2}
\subsection{Very basic objects in surfaces}\label{2.0}

Unless otherwise stated, in this work, $S$ always denotes an oriented 
finite type surface without boundary, that is, obtained from a 
compact oriented surface by removing finitely many points, called 
{\em{punctures}}. One does not of course need an explicit realisation 
of $S$ as a compact minus finitely many points. One can simply take $S$ to be a finite type surface. Up to homeomorphism, $S$ is a compact minus finitely many points, with each end of $S$ mapped homeomorphically to a neighbourhood of the omitted point on the compact surface. A {\em{multicurve}} $\Gamma $ on 
$S$ is a union of simple closed nontrivial nonperipheral loops on $S$, 
which are isotopically distinct, and disjoint. A multicurve is 
{\em{maximal}} if it is not properly contained in any other 
multicurve. Of course, this simply means that the number of loops in 
the multicurve is $3g-3+b$, where $g$ is the genus of $S$ and $b$ the 
number of punctures. A {\em{gap}} is a connected open subsurface $\alpha $ of a 
given surface $S$ such that the topological boundary $\partial \alpha 
$ of $\alpha $ in $S$ is a multicurve. If $\Gamma $ is a multicurve on $S$, a  {\em{gap of $\Gamma $}}  is simply a component of $S\setminus (\cup \Gamma 
)$. If $\alpha $ is any gap, $\Gamma $ is a  {\em{multicurve in 
$\alpha $}} if it satisfies all the above conditions for a closed 
surface, and, in addition, $\cup \Gamma \subset \alpha $ and no loops 
in $\Gamma $ are homotopic to components of $\partial \alpha $. A 
positively oriented Dehn twist round a loop $\gamma $ on an oriented 
surface $S$ will always be denoted by $\tau _{\gamma }$.

 Let $\alpha _{i}\subset S$ be  a gap or loop for $i=1$, $2$, isotoped 
so 
that $\partial \alpha _{1}$ and $\partial \alpha _{2}$ have only 
essential intersections, or with $\alpha _{1}\subset \alpha _{2}$ 
if $\alpha _{1}$ is a loop which can be isotoped into $\alpha _{2}$. 
Then the {\em{convex hull}} $C(\alpha _{1},\alpha _{2})$ 
of $\alpha _{1}$ and $\alpha _{2}$ is the union of $\alpha _{1}\cup 
\alpha _{2}$ and any components of $S\setminus (\alpha _{1}\cup 
\alpha 
_{2})$ which are topological discs with at most one puncture. Then 
$C(\alpha _{1},\alpha _{2})$ is again a gap or a loop. The latter 
only 
occurs if $\alpha _{1}=\alpha _{2}$ is a loop. We are only interested 
in the convex hull up to isotopy, and it only 
depends on  $\alpha _{1}$ and $\alpha _{2}$ up 
to isotopy. It is so called because, if $\alpha _{i}$ is chosen to 
have geodesic boundary, and $\tilde {\alpha _{i}}$ denotes 
the preimage of $\alpha _{i}$ in the hyperbolic plane covering $S$, 
then up to isotopy $C(\alpha _{1},\alpha _{2})$ is the projection to $S$ of 
the convex hull of any component of $\tilde{\alpha _{1}}\cup 
\tilde{\alpha _{2}}$.

\subsection{Teichm\"uller space}\label{2.1} 
We consider Teichm\" uller space 
${\cal T}(S)$ of a surface $S$. If $\varphi _{i}:S\to 
S_{i}=\varphi _{i}(S)$ is an orientation-preserving homeomorphism, 
and $S_{i}$ is a complete hyperbolic surface with constant curvature 
$-1$, then we define the equivalence relation $\varphi _{1}\sim 
\varphi _{2}$ if and only if there is an orientation-preserving 
isometry $\sigma :S_{1}\to S_{2}$ such that $\sigma \circ \varphi 
_{1}$ is isotopic to $\varphi _{2}$. We define $[\varphi ]$  to be 
the equivalence class of $\varphi $, and ${\cal T}(S)$ to be the set 
of all such $[\varphi ]$, this being regarded as sufficient, since 
definition of a function includes definition of its domain. We shall 
often fix   a complete hyperbolic 
metric of constant curvature $-1$ on $S$ itself, which we shall also 
refer to as ``the'' Poincar\'e metric on $S$. 

Complete hyperbolic structure in dimension two is equivalent to complex 
structure, for any orientable surface $S$ of finite topological type and  
negative Euler characteristic, by the Riemann mapping theorem.  
So endowing such a surface $S$ with a complex 
structure defines an element of the Teichm\"uller space $\cal{T}(S)$. 
More generally, the Measurable Riemann Mapping Theorem  
implies that supplying a bounded measurable conformal structure for 
$S$ is enough to define an element of $\cal{T}(S)$, and indeed $\cal{T}(S)$ 
is often (perhaps usually) defined in this way.

\subsection{Teichm\"uller distance}\label{2.2}
We shall use $d$ to denote Teichm\"uller distance, so long as the 
Teichm\"uller space ${\cal T}(S)$ under consideration is regarded as 
clear. Moreover a metric $d$ will always be Teichm\"uller metric 
unless otherwise specified.
If more than one space is under consideration, we shall use 
$d_{S}$ to denote Teichm\"uller distance on ${\cal T}(S)$. The 
distance is defined as 
$$d([\varphi _{1}],[\varphi _{2}])={\rm{inf}}\{
{1\over 2} \log \Vert \chi \Vert 
_{qc}:[\chi \circ \varphi _{1}]=[\varphi _{2}]\} ,$$
where
$$\Vert \chi \Vert _{qc}=\Vert K(\chi )\Vert 
_{\infty }\vert ,\ \ K(\chi )(z)=\lambda (z)/\mu 
(z),$$
where $\lambda (z)^{2}\geq \mu 
(z)^{2}>0$ are the eigenvalues of $D\chi 
_{z}^{T}D\chi _{z}$, and $D\chi _{z}$ is the 
derivative of $\chi $ at $z$ (considered as a 
$2\times 2$ matrix) and $D\chi _{z}^{T}$ is its transpose. The 
infimum is achieved uniquely at a map $\chi $ which is given locally 
in terms of a unique quadratic mass 1 differential $q(z)dz^{2}$ on 
$\varphi 
_{1}(S)$, and its {\em{stretch}} $p(z)dz^{2}$ on $\varphi _{2}(S)$. 
The local coordinates are 
$$\zeta =x+iy=\int _{z_{0}}^{z}\sqrt{q(t)}dt,$$
$$\zeta '=\int _{z_{0}'}^{z'}\sqrt{p(t)}dt.$$
With respect to these local coordinates, 
$$\chi (\zeta )=\chi (x+iy)=\lambda x+i{y\over \lambda }=\zeta '.$$
So the distortion $K(\chi )(x+iy)=\lambda $ is constant. The singular 
foliations $x={\rm{constant}}$ and $y={\rm{constant}}$ on $\varphi 
(S)$ 
given locally by the coordinate $x+iy$ for $q(z)dz^{2}$ are known as 
the {\em{stable and unstable foliations for $q(z)dz^{2}$.}} We also 
say that $q(z)dz^{2}$ is the {\em{quadratic differential at $[\varphi 
_{1}]$ for $d([\varphi _{1}],[\varphi _{2}])$}}, and $p(z)dz^{2}$ is 
its {\em{stretch}} at $[\varphi _{2}]$.

\subsection{Thick and thin parts}\label{2.4}

Let $\varepsilon $ be any fixed Margulis constant for dimension two, 
that is, for any hyperbolic surface $S$, if $S_{<\varepsilon }$ is 
the 
set of points of $S$ through which there is a nontrivial closed loop 
of length 
$<\varepsilon $, then $S_{<\varepsilon }$ is a (possibly empty) union 
of cylinders with disjoint closures. Then 
$({\cal{T}}(S))_{<\varepsilon }$ is the set of $[\varphi ]$ for 
which $(\varphi (S))_{<\varepsilon }$ contains an least one 
nonperipheral cylnder. The complement of 
$({\cal{T}}(S))_{<\varepsilon }$ is $({\cal{T}}(S))_{\geq \varepsilon 
}$. 
We shall sometimes write simply ${\cal{T}}_{<\varepsilon }$ and 
${\cal{T}}_{\geq \varepsilon }$ if it is clear from the context which surface
is 
meant. We shall also write ${\cal{T}}(\gamma ,\varepsilon )$ for the 
set 
of $[\varphi ]$ such that $(\varphi (S))_{<\varepsilon }$ contains a 
loop homotopic to $\varphi (\gamma )$. If $\Gamma $ is a set of 
loops, we write 
$${\cal{T}}(\Gamma ,\varepsilon )=\cup \{ {\cal{T}}(\gamma 
,\varepsilon ):\gamma \in \Gamma \} .$$

\subsection{Length and the interpretation of Teichm\"uller 
distance}\label{2.3}
We fix a surface $S$. It will sometimes be convenient to fix a 
hyperbolic metric on $S$, in which case we shall use $\vert \gamma 
\vert $ to denote length of a geodesic path $\gamma $ with respect to 
this metric. With abuse of notation, for 
$[\varphi ]\in {\cal{T}}(S)$ and a
nontrivial nonperipheral closed 
loop $\gamma $ on $S$, we write $\vert \varphi (\gamma )\vert $ for 
the length, with respect to the Poincar\'e metric on the hyperbolic 
surface $\varphi (S)$, of the geodesic homotopic to $\varphi 
(\gamma )$. We write $\vert \varphi (\gamma )\vert '$ for a 
modification of this, obtained as follows. We change the metric in 
$\varepsilon _{0}$-Margulis tube of $\varphi (S)$, for some fixed 
Margulis constant $\varepsilon _{0}$, to the Euclidean metric for 
this 
complex structure in the $\varepsilon _{0}/2$-Margulis tube, so that 
the loop round the annulus is 
length $\sqrt{\vert \varphi (\gamma )\vert }$, and a convex-linear 
combination with the Poincar\'e metric between the $\varepsilon 
_{0}$-Margulis tubes and $\varepsilon _{0}/2$-Margulis tubes. Then we 
take $\vert \varphi (\gamma )\vert '$ 
to be 
the length of the geodesic isotopic to $\varphi (\gamma)$ with 
respect 
to 
this modified metric. If the geodesic homotopic to $\varphi 
(\gamma )$ does not intersect any Margulis tube, then, of course, 
$\vert \varphi (\gamma )\vert =\vert \varphi (\gamma )\vert '$. 
Then for a constant $C$ depending only on $S$ and $\varepsilon _{0}$.
\begin{equation}\label{2.3.1}\vert {\rm{Max}}\{ \vert \log  \vert 
\varphi 
_{2}(\gamma )\vert '
-\log \vert \varphi _{1}(\gamma )\vert '\vert :\gamma \in \Gamma \} 
-d([\varphi _{1}],[\varphi _{2}])\vert \leq C.\end{equation}
Here, $\Gamma $ can be taken to be the set of all simple closed 
nonperipheral 
nontrivial closed loops on $S$. This estimate on Teichm\"uller 
distance derives from the fact that $\vert \varphi (\gamma )\vert '$ 
is inversely proportional to the largest possible square root of 
modulus of an embedded annulus in 
$S$ 
homotopic to $\varphi (\gamma )$. See also 14.3, 14.4 and 14.7 of 
\cite{R1} 
(although the square root of modulus was mistakenly left out of 
\cite{R1}) 
but this estimate appears in other places, for example \cite{M-M2}.
We can simply take $\Gamma $ to be any set of simple 
closed nontrivial nonperipheral loops on $S$ such that   
 that every component of $S\setminus (\cup \Gamma )$ is a 
topological disc with at most one puncture. We shall call such a loop 
set {\em{cell-cutting}}

\subsection{ Projections to subsurface Teichm\"uller 
spaces}\label{2.5}

For any gap $\alpha \subset S$, we define a 
topological surface $S(\alpha )$ without boundary and a continuous 
map $\pi 
_{\alpha }:{\cal{T}}(S)\to {\cal{T}}(S(\alpha ))$. We define $\varphi 
_{\alpha }(S(\alpha ))$ by defining its conformal structure. After 
isotopy of $\varphi $, we can assume that all the components of 
$\varphi (\partial \alpha )$ are geodesic. We now write 
$\overline{\varphi (\alpha )}$ for the compactification of $\varphi 
(\alpha )$ obtained by cutting along $\varphi (\partial \alpha )$ and 
adding boundary components, each one isometric to some component of 
$\varphi (\partial \alpha)$. Then we form the Riemann 
surface $\varphi 
_{\alpha }(S(\alpha ))$ by attaching a once-punctured disc 
$\{ z:0<\vert z\vert \leq 1\} $ 
to $\overline{\varphi (\alpha )}$ along each component of $\varphi 
(\partial 
\alpha )$, taking the 
attaching map to have constant derivative with respect to length on 
the geodesics $\varphi (\partial \alpha )$ and length on the unit 
circle. Then we define $\varphi _{\alpha }=\varphi $ on $\alpha $ and 
then extend the map homeomorphically across each of the punctured 
discs. 
Then $[\varphi _{\alpha }]$ is a well-defined element of ${\cal 
T}(S(\alpha ))$.

Now let $\alpha $ be a nontrivial nonperipheral simple closed loop. 
Fix an 
orientation on $\alpha $ Then we define
$$S(\alpha )=\overline{\mathbb C}\setminus\{ \pm 2,\pm {1\over 2}\} 
.$$
Now we define an element $[\varphi _{\alpha }]=\pi _{\alpha 
}([\varphi ])\in \cal{T}(S(\alpha ))$, 
for each $[\varphi ]\in \cal{T}(S)$, as follows. Fix a Margulis 
constant $\varepsilon $. If $|\varphi (\alpha )|\leq \varepsilon $, 
let $A$ be the closed $\varepsilon $-Margulis tube in $\varphi (S)$ 
homotopic 
to $\varphi (\alpha )$. If $\vert \varphi (\alpha )\vert 
>\varepsilon $, let $A$ be the closed $\eta $-neighbourhood of the 
geodesic homotopic to $\varphi (\alpha )$ where $\eta $ is chosen so 
that $A$ is an embedded annulus, and thus can be chosen bounded from 
$0$ if $\vert \varphi (\alpha )\vert $ is bounded above. Fix a simple 
closed geodesic $\beta (\alpha )$ which intersects $\alpha $ at most 
twice and at least once, depending on whether or not $\alpha $ 
separates $S$. We can assume after isotopy that
$\varphi (\alpha )$ and $\varphi (\beta (\alpha ))$ are both 
geodesic, and we fix a  point $x_{1}(\alpha )\in \varphi (\alpha \cap 
\beta (\alpha))$. We make a Riemann surface $S_{1}$ homeomorphic to 
the sphere, by attaching a unit disc to each component of $\partial 
A$, taking the attaching maps to have constant derivative with 
respect to length.  Then we define $\varphi _{\alpha }$ to map 
$\overline{\mathbb C}$ to $S_{1}$ by mapping $\{ z:\vert z\vert =1\} 
$ 
to $\varphi (\alpha )$, $1$ to $x_{1}(\alpha )$, $\{ z:{1\over 
2}\leq \vert z\vert \leq 2\} $ to $A$ and $\{ z\in \mathbb R:{1\over 
2}\leq z\leq 2\} $ to the component of $\varphi (\beta (\alpha 
))\cap A$ containing $\alpha $. Then $\varphi _{\alpha }(S(\alpha ))$ 
is a four-times punctured sphere and so we have an element 
$[\varphi _{\alpha }]\in \cal{T}(S(\alpha ))$. Now the Teichm\"uller 
space $\cal{T}(S(\alpha ))$ is isometric to the upper half plane 
$H^{2}$ with metric ${1\over 2}d_{P}$, where $d_{P}$ denotes the 
Poincar\'e metric $\dfrac{dx^{2}+dy^{2}}{y^2}$. This is well-known. We now 
give an identification. 
Let  $n_{\alpha }([\varphi ])=n_{\alpha ,\beta (\alpha )}([\varphi 
])$ be the integer assigning 
the minimum value to 
$$m\to \vert \varphi (\tau _{\alpha }^{m}(\beta (\alpha ))\vert .$$
If there is more than one such integer then we take the smallest one. 
There is a bound on the number of such integers of at most two 
consecutive ones. We 
see this as follows. Let $\ell $ be a geodesic in the hyperbolic 
plane 
and let $g$ be a M\"obius involution such that $g.\ell $ is disjoint 
from, and not asymptotic, to $\ell $, and such that the common 
perpendicular 
 geodesic 
segment from $\ell $ to $g.\ell $ meets them in points $x_{0}$, 
$g.x_{0}$, for some $x_{0}\in \ell $. Then the complete
geodesics  meeting both $\ell $ and 
$g.\ell $ and crossing them both at the same angle, are precisely 
those that pass through points $x$ and $g.x$ for some $x\in \ell $, 
and the hyperbolic length of the segment joining $x$ and $g.x$ 
increases strictly with the length between $x_{0}$ and $x$. This 
implies the essential uniqueness of $n$, as follows. We take $\ell $ 
to be a lift of $\varphi (\alpha)$ to the universal cover, and let 
$\ell _{1}$ be another lift of $\varphi (\alpha )$, such that some 
lift of $\varphi (\beta (\alpha ))$ has endpoints on $\ell $ and 
$\ell _{1}$. Then $g$ is determined by making $\ell _{1}=g.\ell $ for 
$g$ as above. But also $\ell _{1}=g_{2}.\ell $, where $g_{2}$ is the 
element of the covering group corresponding to $\varphi (\beta 
(\alpha ))$. We also  have an element $g_{1}$ of the covering group 
corresponding to $\varphi(\alpha )$, which preserves $\ell $ and 
orientation on $\ell $. Then $\vert \varphi (\tau _{\alpha }^{m}(\beta 
(\alpha )))\vert $ is the distance between $x$ and $g.x$ for the 
unique $x$ such that some lift of a loop freely homotopic to  
$\varphi (\tau _{\alpha }^{m}(\beta (\alpha )))$ has endpoints at $x$ 
and $g.x$. The endpoints are $g_{1}^{-m}.y$ and $g_{2}.y$ for 
$y$ such that $x=g_{1}^{-m}.y$. So $x$ is determined by the $y=y_{m}$ 
such that $g.x=g_{2}.g_{1}^{m}.x$. So then $d(x,x_{0})={1\over 
2}d(x_{0},g^{-1}g_{2}g_{1}^{m}.x_{0})$, which takes its minimum at 
either one, or two adjacent, values of $m$.

Then the isometric identification 
with $H^{2}$ can be chosen 
so that, if we use the identification to regard $\pi _{\alpha }$ as a 
map to $H^{2}$, 
\begin{equation}\label{2.5.1}\pi _{\alpha }([\varphi ])=n_{\alpha 
}([\varphi ])+i\vert \varphi 
(\alpha 
)\vert ^{-1}+O(1).\end{equation}

If $\alpha $ is either a gap or a loop we now define a semimetric 
$d_{\alpha }$ by
$$d_{\alpha }([\varphi _{1}],[\varphi _{2}])=d_{S(\alpha )}(\pi 
_{\alpha }([\varphi _{1}]),\pi _{\alpha }([\varphi _{2}])).$$

\section{Teichm\"uller geodesics: long thick and dominant 
definitions.}\label{5}

In this section we explain and expand some of the ideas of long thick 
and dominant (ltd) segments of geodesics in Teichm\"uller space
$\cal{T}(S)$ which were used in \cite{R1}. The theory of \cite{R1} 
was 
explicitly for marked spheres only, because of the application in 
mind, but in fact the theory works, without adjustment, for any finite 
type surface, given that projections $\pi _{\alpha }$ to smaller 
Teichm\"uller spaces $\cal{T}(S(\alpha ))$ for subsurfaces $\alpha $ 
of $S$ have been 
defined in \ref{2.5}. For proofs, for the most part, we refer to 
\cite{R1}.
  The basic idea is to 
get into a position to apply arguments which work along geodesics 
which never enter the thin part of Teichm\"uller space, by projecting 
to suitable subsurfaces $\alpha $ using the projections $\pi 
_{\alpha }$ of \ref{2.5}.
We use  the basic notation and theory of Teichm\"uller space 
$\cal{T}(S)$ from Section \ref{2}.

\subsection{Good position.}\label{5.1} 

Let $[\varphi ]\in 
\cal{T}(S)$. Let $q(z)dz^{2}$ be a quadratic 
differential on $\varphi (S)$. All quadratic differentials, as in 
\ref{2.2}, will be of total mass $1$. Let $\gamma $ be a  
nontrivial nonperipheral simple closed loop on $S$. 
Then there is a limit of isotopies of 
$\varphi (\gamma )$ to 
{\em{good position}} with respect to $q(z)dz^{2}$, with the limit 
possibly passing through some punctures. If $\gamma $ is the 
isotopy limit, then either $\gamma $ is at constant angle to 
the stable and unstable foliations of $q(z)dz^{2}$, or is a union of 
segments between singularities of $q(z)dz^{2}$ which are at constant 
angle to the stable and unstable foliations, with angle $\geq \pi $ 
between any two consecutive segments at a singularity, unless it is a 
puncture.  An equivalent statement is that $\gamma $ is a geodesic with respect to 
the singular Euclidean metric $\vert q(z)\vert d\vert z\vert ^{2}$. 
If two good positions do not coincide, then they bound an open 
annulus 
in $\varphi (S)$ which contains no singularities of $q(z)dz^{2}$.
See also 14.5 of \cite{R1}. 

 The {\em{q-d length }} $\vert \varphi (\gamma )\vert _{q}$ is length with 
respect to the quadratic differential metric for any homotopy 
representative in good position. (See 14.5 of \cite{R1}.) 
We continue, as in Section \ref{2}, to use $\vert \varphi (\gamma 
)\vert $ 
to denote the hyperbolic, or Poincar\'e, length on $\varphi (S)$ of 
the geodesic on $\varphi (S)$ homotopic to $\varphi (\gamma )$. 
If $[\varphi ]\in \cal{T}_{\geq 
\varepsilon }$ then there is a constant $C(\varepsilon )>0$ such that 
for all nontrivial nonperipheral closed loops $\gamma $,
$${1\over C(\varepsilon )}\leq {\vert \varphi (\gamma )\vert 
_{q}\over \vert \varphi (\gamma )\vert }\leq C(\varepsilon ).$$
We also define $\vert \varphi (\gamma )\vert _{q,+}$ to be the 
integral of the norm of the projection of the derivative of $\varphi 
(\gamma )$ to the tangent space of the unstable foliation of 
$q(z)dz^{2}$, and similarly for $\vert \varphi (\gamma )\vert 
_{q,-}$. So these are both majorised by $\vert \varphi (\gamma 
)\vert 
_{q}$, which is, in turn, majorised by their sum.

\subsection{Area.}\label{5.2}

The following definitions come from 9.4 of \cite{R1}. 
For any essential nonannulus 
subsurface  $\alpha \subset S$, $a(\alpha 
,q)$ is the area with respect to $q(z)dz^{2}$ of $\varphi (\alpha )$ 
where $\varphi (\partial \alpha )$ is in good position and bounds the 
smallest area possible subject to this restriction. If $\alpha $ is a 
loop at $x$ then $a(\alpha ,q)$ is the smallest possible area of an 
annulus of modulus $1$ and homotopic to $\varphi (\alpha )$. We are 
only interested in this quantity up to a bounded multiplicative 
constant. It is boundedly proportional to $\vert\varphi 
(\alpha 
)\vert _{q}^{2}$ whenever $\varphi (\alpha )$ is in good position, 
and $\vert \varphi (\alpha )\vert $ is bounded. 
We sometimes write $a(\alpha ,x)$ or even 
$a(\alpha )$ for $a(\alpha ,q)$, if it is clear from the context what 
is meant.

Generalising from \ref{5.1}, there is a constant $C(\varepsilon )$ 
such that, if 
$\varphi (\alpha )$ is homotopic to a component of $(\varphi 
(S))_{\geq \varepsilon }$, then for all nontrivial nonperipheral 
non-boundary-homotopic closed loops $\gamma \in \alpha $,
$${1\over C(\varepsilon )}\vert \varphi (\gamma )\vert \leq {\vert 
\varphi (\gamma )\vert _{q}\over \sqrt{a(\alpha ,q)+a(\partial \alpha 
,q)}}\leq C(\varepsilon )\vert \varphi (\gamma )\vert .$$

Now suppose that $\ell $ is a directed geodesic segment in 
$\cal{T}(S)$
containing 
$[\varphi ]$, and that $q(z)dz^{2}$ is the quadratic differential 
at $[\varphi ]$ for $d([\varphi ],[\psi ])$ for any $[\psi ]$ in the 
positive direction along $\ell $ from $[\varphi ]$ (see \ref{2.1}.) 
Let $p(z)dz^{2}$ be the stretch of $q(z)dz^{2}$ at $[\psi ]$, and 
let $\chi $ be the minimum distortion map with $[\chi \circ \varphi 
]=[\psi ]$. Then $\chi $ maps the $q$-area element to the $p$-area 
element.
Then $a(\alpha ,q)=a(\alpha ,p)$ if $\alpha $ is a gap, but  if 
$\alpha $ is a loop, $a(\alpha ,y)$ varies for $y\in \ell $.

If $\alpha $ is a gap, we define $a'(\alpha )=a(\alpha )$. Now suppose $\alpha $ is a loop.  We define 
$a'(\alpha ,[\varphi ], q)$ (or simply $a'(\alpha )$ if the context 
is 
clear) to be the $q$-area of the largest modulus annulus (possibly 
degenerate) homotopic to $\varphi (\alpha )$ and with boundary 
components in good position for $q(z)dz^{2}$. Then in both cases, gap and loop,  
$a'(\alpha )$ is 
constant along the geodesic determined by $q(z)dz^{2}$.

\subsection{The long thick and dominant definition}\label{5.3}

Now we fix parameter functions $\Delta $, $r$, $s:(0,\infty )\to 
(0,\infty )$ and a constant $K_{0}$. 

Let $\alpha $ be a gap. Let $\ell $ be a geodesic segment.
We say that $\alpha $ is {\em{long, $\nu $-thick and dominant at }} 
$x$ 
(for $\ell $, and with respect to $(\Delta ,r,s)$) if $x\in \ell $ is the 
centre of a segment $\ell _{1}$ in the geodesic extending $\ell $ of  
length $2\Delta (\nu )$ 
such that $\vert \psi (\gamma )\vert \geq \nu $ for all $[\psi ]\in 
\ell _{1}$ and nontrivial nonperipheral $\gamma \subset \alpha $ not 
homotopic to boundary components, but $\ell _{1}\subset {\cal 
T}(\partial 
\alpha ,r(\nu ))$ and $a(\partial \alpha ,y)\leq s(\nu )a(\alpha, y)$ 
for all $y\in \ell _{1}$. We shall then also say that $\alpha $ is 
long $\nu $-thick and dominant along $\ell _{1}$. See 15.3 of 
\cite{R1}. 

A loop $\alpha $ at 
$x$ is $K_{0}$-{\em{flat at}}  $x=[\varphi ]$ (for $\ell $) if 
$a'(\alpha )\geq K_{0}a(\alpha)$.  This was not quite the definition 
made in 
\cite{R1}  
where the context was restricted to $S$ being a punctured sphere, 
but the results actually worked for any finite type surface. The term 
arises because if $\alpha $ is $K_{0}$-flat then the metric $\vert 
q(z)\vert dz^{2}$ is equivalent to a Euclidean (flat) metric on an 
annulus homotopic to $\varphi (\alpha )$ of modulus $K_{0}-O(1)$.  
For 
fixed $K_{0}$ we may simply 
say 
{\em{flat}} 
rather than $K_{0}$-flat. 

In future, we shall often refer to 
parameter 
functions as quadruples of the form $(\Delta ,r,s,K_{0})$. For a fixed quadruple, we shall refer to $(\alpha ,\ell )$ as {\em{ltd}} if either $\alpha $ is a gap which is long,  $\nu $- thick and dominant along $\ell $ with respect to these parameter functions, or  $\alpha $ is a loop which is $K_0 $-flat along $\ell $.

If $(\alpha ,\ell )$ is ltd, and $\alpha $ is a gap, then $d_{\alpha }(x,y)$ 
is very close to $d(x,y)$ for all $x$, $y\in \ell $. This is a 
consequence of the results of Section 11 of \cite{R1}. For us, here, the fact that the two quantities  differ by some additive constant is sufficient motivation. 
It 
is also probably worth noting (again by the results of Chapter 11 of 
\cite{R1}) that if $[\varphi ]\in \ell $ and
$\pi _{\alpha }([\varphi ])=[\varphi _{\alpha }]$, then $\varphi 
_{\alpha }(S(\alpha ))$ and the component $S(\alpha ,r(\nu ),[\varphi 
])$ of 
$(\varphi (S))_{\geq 
r(\nu )}$ homotopic to $\varphi (\alpha )$ are isometrically very 
close, 
except in small neighbourhoods of some punctures, and the quadratic 
differentials $q(z)dz^{2}/a(\alpha )$ at $[\varphi ]$ for $d([\varphi 
],[\psi ])$ ($[\psi ]\in \ell$) and the quadratic differential 
$q_{\alpha }(z)dz^{2}$ at $[\varphi _{\alpha }]$ for $d_{\alpha 
}([\varphi ],[\psi ])$, are very close. We refer to \cite{R1} for the main applications of ltd's, including a thin triangle result.

\subsection{}\label{5.4}
We now show that ltd's exist, in some abundance.
This was the content of the first basic result about ltd's  in 15.4 of \cite{R1}, which was stated only for $S$ being 
a punctured sphere, but the 
proof worked for a general finite type surface. 

\begin{ulemma} For some $\nu _{0}$ and $\Delta _{0}$ 
depending 
only on $(\Delta ,r,s,K_{0})$ (and the topological type of $S$), 
  the following holds. Any 
geodesic segment $\ell $ of length $\geq \Delta _{0}$ contains a 
segment $\ell '$ for which there is $\alpha $ such that one of the following holds.
\begin{itemize}
\item[.] 
$\alpha $ is a gap  which is long $\nu $-thick and dominant along 
$\ell '$ for some $\nu \geq \nu _{0}$ and $a(\alpha 
)\geq 1/(-2\chi (S)+1)=c(S)$ (where $\chi $ denotes Euler 
characteristic.
\item[.] $\alpha $  is a $K_{0}$-flat loop along $\ell '$ and $a(\alpha )\ge c_0$, where $c_0$ depends only on the topological type of $S$ and the ltd parameter functions
\end{itemize}
More generally,  there 
is $s_{0}$ depending 
only on $(\Delta ,r,s,K_{0})$ (and the topological type of $S$) 
such that, whenever $\omega \subset S$ is such that 
$a(\partial \omega )\leq s_{0}a(\omega )$, then we can find $\alpha $ 
as above with $\alpha \subset \omega $ and 
$a(\alpha )\geq c_0a(\omega)$.
\end{ulemma}

\begin{proof} (See also 15.4 of \cite{R1}.) We consider the 
case $\omega =S$.
Write $r_{1}(\nu )=e^{-\Delta (\nu )}r(\nu )$. 
Let $g=-2\chi (S)\ge 2$  and let
$r_{1}^{g}$ denote the $g$-fold iterate. We then take 
$$\nu _{0}=r_{1}^{g}(\varepsilon _{0})$$ 
for a fixed Margulis constant 
$\varepsilon _{0}$ and we define 
$$\Delta _{0}=2\sum _{j=1}^{g}\Delta (r_{1}^{j}(\varepsilon 
_{0})).$$ 
Then for some $j\leq g$, we can find $\nu =r_{1}^{j}(\varepsilon 
_{0})$ and $[\varphi ]\in \ell $ such that the segment $\ell '$ of 
length 
$2\Delta (\nu )$ centred on $y=[\varphi ]$ is contained in $\ell $, 
and such that 
for   any nontrivial nonperipheral loop 
$\gamma $, either $\vert \varphi '(\gamma )\vert \geq \nu $ for all 
$[\varphi ']\in \ell '$, or $\vert 
\varphi (\gamma )\vert \leq r_{1}(\nu )$ --- in which case 
$\vert 
\varphi '(\gamma )\vert \leq r(\nu )$ for all $[\varphi ']\in \ell 
'$. 
For any loop $\gamma $ with $\vert \varphi (\gamma )\vert 
<\varepsilon _{0}$, 
if $\beta $ is a gap such that $\gamma \subset \partial \beta $ and 
there 
is a  component of $(\varphi (S))_{\geq 
\varepsilon _{0} }$ homotopic to $\varphi (\beta )$ and separated 
from the flat 
annulus homotopic to $\varphi (\gamma )$ by an annulus of modulus 
$\Delta _{1}$, we have, since every zero of $q(z)dz^{2}$ has order at 
most $2g$, for a constant $C_{1}$ depending only on the topological 
type of $S$,
\begin{equation}\label{5.4.1}C_{1}^{-1}a(\gamma ,[\varphi ])e^{\Delta 
_{1}}
    \leq a(\beta )\leq 
C_{1}a(\gamma ,[\varphi ])e^{(2g+2)\Delta _{1}}.\end{equation}

Now let $\beta $ be a subsurface such that $\varphi (\beta )$ is 
homotopic to a component $S(\beta ,\nu )$ of 
$(\varphi (S))_{\geq \nu }$ or $(\varphi (S))_{\geq \nu }$ such that $a'(\beta )+a'(\partial \beta )$ is  of maximal area, where $\beta =\partial \beta $ if $\beta $ is a loop. This means that 
$$a'(\beta )+a'(\partial \beta )\ge \frac{1}{3g},$$
with $1/3g$ replaced by $1/g$, if $\beta $ is a gap.  Then by (\ref{5.4.1}), 
we have a bound of $O(e^{(2g^{2}+2g)/\nu })$ 
on the ratio of areas of any two components of $(\varphi (S))_{\geq 
\varepsilon _{0} }$ in $S(\beta ,\nu )$. If there is a component $\gamma $ of $\partial \beta $ such that
$$a'(\gamma )\geq e^{-1/(9gr_1(\nu))}a'(\beta ),$$
then we take $\gamma =\alpha $, and $\gamma $ is $K_0$-flat at $[\varphi ]$, assuming that $r(\nu )$ is sufficiently small, for all $\nu $ given $K_0$, and the lower bound on $a'(\beta )$ gives the required lower bound of $a'(\gamma )\ge c_0$, for $c_0$ depending only on $\nu _0$, that is, only on $(\Delta ,r)$ and the topological type of $S$. Otherwise, we take $\beta =\alpha $, and we have, for all $y'\in \ell '$,
$$a(\partial \alpha ,y')\leq e^{\Delta 
(\nu)}e^{-1/(9gr(\nu))}a(\alpha 
).$$ 
Assuming $r(\nu )$ is sufficiently small given $s(\nu )$ and 
$\Delta (\nu )$, $\alpha $ is long $\nu $-thick and dominant along 
$\ell '$ for $(\Delta, r,s)$, and $a(\alpha )\geq 1/(g+1)$, as required.

The case $\omega =S$ is similar. We only need $s_{0}$ small enough 
for $a(\partial \omega )/a(\omega )$ to remain small along a 
sufficiently long segment of $\ell $.
\end{proof}

Because of this result, we can simplify our notation. So let $\nu 
_{0}$ 
be as above, given $(\Delta ,r,s,K_{0})$. We shall simply say 
$\alpha $ is {\em{ltd}} (at $x$, or along $\ell _{1}$) 
if either $\alpha $ is a gap and long $\nu $-thick and dominant for 
some $\nu \geq \nu _{0}$, or $\alpha $ is a loop and $K_{0}$-flat, at $x$ or along $\ell _1$. 
We shall also say that $(\alpha ,\ell _{1})$ is ltd, and, if we want to be more specific, we shall say that $(\alpha ,\ell _1)$ is ltd with respect to $(\Delta ,r,s,K_0,\nu _0)$.

\subsection{}\label{5.5}
We refer to Chapters 14 and 15 of \cite{R1} for a summary of all the 
results 
concerning ltd's, where, as 
already stated, the context is restricted to $S$ being a punctured 
sphere, but the results work for any finite type surface. The main 
points about ltd's are, firstly, that they are good coordinates, in 
which 
arguments which work in the thick part of Teichm\" uller space can be 
applied, and secondly that there is only bounded movement in the 
complement of ltd's. This second fact
is worth scrutiny. It is, at first 
sight, surprising. It is proved in 15.14 of \cite{R1}, which we now 
state, actually slightly corrected since short interior loops in 
$\alpha $ were forgotten in the statement there (although the proof 
given there does consider short interior loops) and slightly expanded 
in the case of $\alpha $ being a loop. 

\begin{ulemma}    Fix 
long thick and dominant parameter functions 
$(\Delta ,r,s,K_{0})$, and 
let $\nu _{0}>0$ also be given and sufficiently small. 
Then there exists  $L=L(\Delta ,r,s,K_{0},\nu _{0})$ such that the 
following 
holds. Let  $\ell $ be a geodesic segment and let $\ell _{1}\subset 
\ell $ and let  
$\beta  \subset S$ be a maximal subsurface up 
to homotopy  with the property that $\beta \times \ell _{1}$ is 
disjoint from $\alpha \times \ell '$ for  all ltd's  $(\alpha  ,\ell ')$ with respect to $(\Delta ,r,s,K_0,\nu _0)$. Suppose 
also that all components of 
$\partial \beta $ are 
nontrivial nonperipheral. Then $\beta $  is a disjoint union of gaps 
and loops $\beta _1$ such that the following hold.
\begin{equation}\label{5.5.1}\vert \varphi (\partial \beta _1)\vert 
\leq 
L{\rm{\ for\ all\ }}[\varphi ]\in \ell _1
.\end{equation} 
If $\beta _1$ is a gap, then for all $[\varphi ]$, 
$[\psi ]\in \ell _1$ and nontrivial nonperipheral 
non-boundary-parallel closed loops $\gamma $ in $\beta _1$,
\begin{equation}\label{5.5.2} L^{-1}\leq {\vert \varphi 
(\gamma 
)\vert '\over 
\vert \psi (\gamma )\vert '}\leq L,\end{equation}
\begin{equation}\label{5.5.5}
    \vert \varphi (\gamma )\vert \geq L^{-1}.
    \end{equation}
If $\beta _1$ is a loop, then for all $[\varphi ]$, 
$[\psi ]\in \ell _1$,
\begin{equation}\label{5.5.3} \vert {\rm{Re}}(\pi _{\beta _1 
}([\varphi ])-\pi _{\beta _1 }([\psi ]))\vert \leq L.\end {equation}

 Also if $\gamma $ is 
 in the interior of $\beta $, and 
$\ell _{1}=[[\varphi _{1}],[\varphi _{2}]]$, then given $\varepsilon 
_{1}>0$ there exists $\varepsilon _{2}>0$ depending only on 
$\varepsilon _{1}$ and $(\Delta ,r,s,K_0,\nu _0)$
such that
\begin{equation}\label{5.5.4}\begin{array}{l}
{\rm{If\ }}\vert \varphi (\gamma )\vert 
< \varepsilon _{2},{\rm{\ then}}\cr
{\rm{Min}}(\vert \varphi _{1}(\gamma )\vert ,\vert \varphi 
_{2}(\gamma )\vert )\leq \varepsilon _{1}{\rm{,\ and}}\cr 
{\rm{Max}}(\vert \varphi _{1}(\gamma )\vert ,\vert \varphi 
_{2}(\gamma )\vert )\leq  L.\cr
\end{array}\end{equation}

\end{ulemma} 

If (\ref{5.5.1}), and either (\ref{5.5.2}) and (\ref{5.5.5}), or 
(\ref{5.5.3}) 
hold for $(\beta _1,\ell _{1})$, depending on whether $\beta _1$ is a gap 
or a loop, we say that 
$(\beta _1,\ell 
_{1})$ is {\em{bounded}} (by $L$). Note that $L$ depends on the ltd 
parameter 
functions, and therefore is probably extremely large compared with 
$\Delta (\nu )$ for many values of $\nu $, perhaps even compared 
with $\Delta (\nu _{0})$. 

Here are some notes on the proof. For fuller details, see 5.14 of 
\cite{R1}. First of all, under the assumption that $\partial \beta 
$ satisfies the condition (\ref{5.5.1}), it is shown that $\beta $ 
is a union of $\beta _1$ satisfying (\ref{5.5.1}) to (\ref{5.5.5}). 
First, we show that (\ref{5.5.2}) holds for all $\gamma \subset 
\beta $ such that $\vert \varphi _{i}(\gamma )\vert $ is bounded 
from $0$ for $i=1$, $2$, and that (\ref{5.5.4}) holds for $\beta $.
This is done by breaking $\ell $ into three segments, with $a'(\beta 
)$ dominated by $\vert \varphi (\partial \beta )\vert _{q}^{2}$ on 
the two outer segments $\ell _{-}$, $\ell _{+}$, where $q(z)dz^{2}$ 
is 
the quadratic differential at $[\varphi]$ for $\ell $. The middle 
segment $[[\varphi _{-}],[\varphi _{+}]]$  has to be of bounded 
length by 
the last part of \ref{5.4}, since there are no ltd's in $\beta $ 
along $\ell $. Then  $\vert \varphi (\partial \beta )\vert _{q}$ is 
boundedly proportional to $\vert \varphi (\partial \beta )\vert 
_{q,-}$ along $\ell _{-}$, and to  $\vert \varphi (\partial \beta 
)\vert _{q,+}$ along $\ell _{+}$.  We can obtain (\ref{5.5.2}) along 
$\ell _{+}$, at least for a nontrivial $\beta ' \subset \beta $ 
for 
which  we can ``lock ''  loops $\varphi (\gamma )$, for which $\vert 
\varphi (\gamma )\vert $ is bounded,  along stable segments 
to $\varphi  (\partial \beta )$. If $\beta ' \neq \beta $ 
and 
$\gamma '\subset \partial \beta ' $ is in the interior of $\beta 
$, 
then either  
$\vert \varphi _{+}(\gamma ')\vert $ is small, or  $\vert \varphi 
_{+}(\gamma ')\vert _{q_{+}}$ is dominated by $\vert \varphi 
_{+}(\gamma 
')\vert _{q_{+},-}$, where $q_{+}(z)dz^{2}$ is the stretch of 
$q(z)dz^{2}$ at $[\varphi _{+}]$. In the case when $\vert \varphi 
_{+}(\gamma ')\vert $ is small, there is some first point $[\varphi 
_{++}]\in \ell _{+}$ for which  $\vert \varphi 
_{++}(\gamma ')\vert _{q_{++}}$ is dominated by $\vert \varphi 
_{++}(\gamma 
')\vert _{q_{++},-}$, where $q_{++}(z)dz^{2}$ is the stretch of 
$q(z)dz^{2}$ at $[\varphi _{++}]$. For this point,  $\vert \varphi 
_{++}(\gamma ')\vert $ is still small, and can be locked to a small 
segment of $\varphi _{++}(\partial \beta )$. This means that we can 
deduce that $\vert \varphi _{2}(\gamma ')\vert $ is small, giving 
(\ref{5.5.4}). So one proceeds by induction on the topological 
type of $\beta $,   obtaining (\ref{5.5.2}) and (\ref{5.5.4})
for $\beta $ from that for 
$\beta \setminus \beta '$. Then (\ref{5.5.4}) and (\ref{5.5.2}) 
imply that the set of loops with $\vert \varphi _{1}(\gamma )\vert 
<\varepsilon _{1}$ or $\vert \varphi _{2}(\gamma )\vert <\varepsilon 
_{1}$, for a sufficiently small $\varepsilon _{1}$, do not intersect 
transversally. This allows for a decomposition into sets $\beta _1$ 
satisfying (\ref{5.5.1}), (\ref{5.5.2}) and (\ref{5.5.5}).
 One then has to remove the hypothesis 
(\ref{5.5.1}) for $\partial \beta $. This is done by another 
induction, considering successive gaps and loops $\beta '$ disjoint 
from all ltd's along segments $\ell '$ of $\ell $, with $\vert 
\varphi (\partial \beta ')\vert \leq \varepsilon _{0}$ for $[\varphi 
]\in \ell '$, possibly with $\partial \beta '=\emptyset $. One then 
combines the segments and reduces the corresponding $\beta '$, 
either combining two at a time, or a whole succession together, if the 
$\beta '$ are the same along a succession of segments. In finitely many 
steps, one reaches $(\beta ,\ell )$ finding in the process that 
$\partial \beta $ does satisfy (\ref{5.5.1}).

As for showing that $\beta $ satisfies (\ref{5.5.3}), that follows 
from the 
following lemma --- which is proved in 15.13 of \cite{R1}, but not 
formally stated. Note that if $\beta $ is a loop, $a'(\beta 
,[\varphi ])$ is constant for $[\varphi ]$ in a geodesic segment 
$\ell $, but $a(\beta  ,q)$ is proportional to $\vert \varphi 
(\beta )\vert _{q}^{2}$ (if $q(z)dz^{2}$ is the quadratic 
differential at $[\varphi ]$ for $\ell $), which has at most one 
minimum on the geodesic segment and  otherwise increases or decreases 
exponetially with distance along the segment, depending on whether 
$\vert \varphi (\beta )\vert _{q}$ is boundedly proportional to 
$\vert \varphi (\beta )\vert _{q,+}$ or  $\vert \varphi (\beta 
)\vert _{q,-}$. 
So for any $K_{0}$, the set of $[\varphi ]\in \ell $ for which 
$a'(\beta )\geq K_{0}a(\beta ,[\varphi ])$ is a single segment, up 
to bounded distance. This motivates the following.  

\begin{lemma}\label{5.6} Given $K_{0}$, there is $C(K_{0})$ such that 
the following holds. Let $\ell $ be any geodesic segment. Suppose 
that  $\beta $ is a loop and $a'(\beta )\leq K_{0}a(\beta ,[\varphi ])$ for all $[\varphi 
]\in 
\ell $. Then for all $[\varphi ]$, $[\psi ]\in \ell $,
$$\vert {\rm{Re}}(\pi _{\beta }([\varphi ])-\pi _{\beta }([\psi 
]))\vert \leq C(K_{0}).$$
\end{lemma}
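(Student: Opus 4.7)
The plan is to parametrize $\ell$ by Teichm\"uller time and exploit the fact that $a'(\beta )$ is constant along $\ell$ while $a(\beta ,[\varphi _{t}])$ is boundedly proportional to $|\varphi _{t}(\beta )|_{q_{t}}^{2}$. This will let me bound the maximal modulus of the flat annulus around $\beta $ on $\ell$ by a constant depending only on $K_{0}$, and then propagate that to a bound on $|{\rm{Re}}(\pi _{\beta }([\varphi ])-\pi _{\beta }([\psi ]))|$.

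First I would write $\ell =[[\varphi _{0}],[\varphi _{T}]]$, let $q(z)dz^{2}$ be the quadratic differential at $[\varphi _{0}]$, and use that, up to bounded factors, $|\varphi _{t}(\beta )|_{q_{t}}^{2}$ equals $|\beta |_{q,+}^{2}e^{2t}+|\beta |_{q,-}^{2}e^{-2t}$, a strictly convex function of $t$ attaining a unique global minimum at some $t_{0}\in \mathbb{R}$. The modulus $m(t)$ of the largest flat annulus around $\varphi _{t}(\beta )$ is then boundedly proportional to $a'(\beta )/|\varphi _{t}(\beta )|_{q_{t}}^{2}$, and is maximized over $t\in [0,T]$ at whichever point of $[0,T]$ is closest to $t_{0}$. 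Applying the hypothesis $a'(\beta )\leq K_{0}a(\beta ,[\varphi ])$ at that point gives $m(t)\leq CK_{0}$ for every $t\in [0,T]$, with $C$ depending only on the topological type of $S$.

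Next I would relate ${\rm{Re}}(\pi _{\beta })$ to the combinatorial twist $n_{\beta }$ from \ref{2.5} via (\ref{2.5.1}) and bound the change in $n_{\beta }$ along $\ell $. At each $t$, put $\varphi _{t}(\beta )$ and the reference loop $\varphi _{t}(\beta (\beta ))$ in good position with respect to $q_{t}$, and observe that outside the maximal flat annulus around $\varphi _{t}(\beta )$ the good-position representative of $\beta (\beta )$ deforms in a controlled way under the Teichm\"uller flow, by standard singular-Euclidean arguments about $q$-geodesics in the thick part of $\varphi _t(S)$. The only mechanism that produces twisting is the wrapping of $\beta (\beta )$ inside the flat annulus, which is bounded by a multiple of $m(t)$, hence by a multiple of $K_{0}$. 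Breaking $\ell $ at the minimum of $|\varphi _{t}(\beta )|_{q_{t}}$ and using that $m(t)$ decays exponentially in $|t-t_{0}|$, I would sum the twist contributions as a geometric series to get a bound of $O(K_{0})$ on the total change of $n_{\beta }$ across $\ell $, which via (\ref{2.5.1}) yields $|{\rm{Re}}(\pi _{\beta }([\varphi ])-\pi _{\beta }([\psi ]))|\leq C(K_{0})$.

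The main obstacle will be making precise the assertion that ``all twisting is concentrated in the flat annulus''. Concretely, one must compare the integer $n_{\beta }$ defined at two different points on $\ell $ using two different reference quadratic differentials, and show that the difference is geometrically bounded by the wrapping number just described. This requires careful bookkeeping in the regime where $\beta $ has moderate length but nearly balanced horizontal and vertical components in $q$, so that the embedded flat annulus is degenerate or very thin and the twist must be extracted from spiralling of the singular foliation rather than from wrapping inside an honest Euclidean cylinder. I expect this technical step to occupy the bulk of the argument and to be essentially the content of 15.13 of \cite{R1}, which is the reference the author cites.
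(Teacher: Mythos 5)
Your first step coincides with the paper's setup and with the discussion immediately preceding the lemma: $a'(\beta )$ is constant along $\ell $, $a(\beta ,[\varphi _t])$ is boundedly proportional to $\vert \varphi _t(\beta )\vert _{q_t}^2$, which has a single minimum and otherwise moves exponentially, so the flat-annulus modulus $m(t)\asymp a'(\beta )/a(\beta ,[\varphi _t])$ is at most a bounded multiple of $K_0$ and decays exponentially away from the balanced point; and the quantity to control is $n_{\beta }$ via (\ref{2.5.1}). The gap is in your second step, which is where the entire content of the lemma lies. Your key estimate --- that the change of $n_{\beta }$ per unit time (or per unit subsegment) is bounded by a multiple of $m(t)$, so that the total can be summed as a geometric series --- is not justified and is not obviously true pointwise. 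The instantaneous rate of change of ${\rm{Re}}(\pi _{\beta })$ along a Teichm\"uller geodesic is only controlled, a priori, by the reciprocal of the hyperbolic length of $\beta $, i.e.\ by the modulus of the whole Margulis tube, to which expanding (non-flat) annuli can contribute arbitrarily much even while $a'(\beta )/a(\beta ,[\varphi _t])$ stays small. The assertion that such contributions produce no twisting is exactly what the lemma claims, so as written your argument assumes the hard part; you acknowledge this yourself and defer it to ``standard singular-Euclidean arguments'' and to 15.13 of \cite{R1}, but that deferred step is the proof, not a technicality.

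For comparison, the paper does not bound a summable twisting rate at all. It first deletes a subsegment of length bounded in terms of $K_0$ and $\varepsilon _0$ (on which the variation of ${\rm{Re}}(\pi _{\beta })$ is bounded for elementary reasons), so that on the reduced segment $a'(\beta )\leq \varepsilon _0 a(\beta ,[\varphi ])$ and, without loss of generality, $\vert \varphi (\beta )\vert _q$ is boundedly proportional to $\vert \varphi (\beta )\vert _{q,+}$. It then uses that $n_{\beta }([\varphi ])$ is determined to within $O(1)$ by the integer $m$ minimising the qd-length of $\varphi (\tau _{\beta }^m(\zeta ))$, and shows that the good positions of $\varphi (\tau _{\beta }^m(\zeta ))$ for all $m$ simultaneously are locked to $\varphi (\beta )$ along stable segments whose qd-length is small compared with $\vert \varphi (\beta )\vert _q$; hence the minimising $m$, and so $n_{\beta }$, does not change at all on the reduced segment. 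In other words, the mechanism is ``no twisting outside a bounded-length piece'', not ``bounded twisting at each time, summed''. To make your version work you would need to prove your per-segment bound on the change of $n_{\beta }$ in precisely the regime you flag (thin or degenerate flat annulus, $\beta $ short because of expanding annuli or nearly balanced horizontal and vertical parts), and nothing in the thick-part comparison of the qd- and Poincar\'e metrics supplies that; an argument of the locking type, as in the paper, is what fills the hole.
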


\noindent{\em{Proof.}}
The argument is basically given in 15.13 of 
\cite{R1}. Removing a  segment of length bounded in terms of $K_{0}$, 
$\varepsilon _{0}$ at one end, we obtain a reduced segment $\ell '$ 
such that  that $a'(\beta )\leq \varepsilon _{0}a(\beta ,[\varphi 
])$ for all $[\varphi ]\in \ell '$.
We use the 
quantity $n_{\beta }([\varphi ])$ of \ref{2.5}, which is 
${\rm{Re}}(\pi _{\beta  }([\varphi ]))+O(1)$ and is given to within 
length $O(1)$ 
by $m$ minimising $\vert \varphi (\tau _{\beta }^{m}(\zeta ))\vert 
$ for a fixed $\zeta $ crossing $\beta $ at most twice (or a bounded 
number of times). This is the same to within $O(1)$ as the $m$ 
minimising
$\vert \varphi (\tau _{\beta }^{m}(\zeta ))\vert 
_{q}$ for any quadratic differential $q(z)dz^{2}$. (To see this, 
note that the shortest paths, in the Poincar\'e metric, across a 
Euclidean annulus $\{ z:r<|z|<1\} $, are the restrictions of straight lines through the origin.)  
Assume without loss of generality that $\vert \varphi (\beta )\vert 
_{q}$ 
is boundedly proportional to $\vert \varphi (\beta )\vert _{q,+}$ 
for 
$[\varphi]\in \ell $, and $q(z)dz^{2}$ the quadratic differential at 
$[\varphi ]$ for $\ell $. 
The good positions of  $\varphi (\tau _{\beta }^{m}(\zeta ))$ for 
all 
$m$ are locked together along stable segments whose qd-length is 
short in comparison with $\vert \varphi (\beta )\vert _{q}$, if 
$\varepsilon _{0}$ is sufficiently small. So $n_{\beta }([\varphi 
])$ 
varies by $<1$ on $\ell $, and is thus constant on $\ell '$, if 
$\varepsilon _{0}$ is sufficiently small, and hence varies by at most 
$C(K_{0})$ on $\ell $.\Box

\subsection{A Chain of ltd's.}\label{7.13}

Now the result we are aiming for is the following.
The proof of this result is different in character from that of 
\ref{5.4}, being, essentially, a construction of a zero measure 
Cantor 
set, while \ref{5.4} obtained a set with a lower bound on area. This 
result is in any case more sophisticated, because it uses 
\ref{5.5} --- and hence also \ref{5.4} --- in the course of the 
proof. This result can be regarded as a parallel to the existence of 
a tight geodesic in the curve complex used by Minsky et al.. \cite{Min2,B-C-M}. For 
reasons which are not entirely clear to me, but which may be 
significant, this result appears to be much harder to prove.

\begin{utheorem} Fix long thick and dominant parameter 
functions and flat constant $(\Delta ,r,s,K_{0})$. 
Then there exist $\Delta _{0}$, $\delta _0>0$ and $\nu 
_{0}$  depending 
only on $(\Delta ,r,s,K_{0})$
and the topological type of $S$ such that the following holds. 
Let $[y_{0},y_{T}]=[[\varphi _{0}],[\varphi _{T}]]$ be any geodesic 
segment 
in 
${\cal T}(S)$ 
of length $T\geq \Delta _{0}$, parametrised by length. 
Then 
there exists a sequence $(\alpha _{i},\ell _{i})$ ($1\leq i\leq 
R_{0}$)   such that:
\begin{itemize}
\item $(\alpha _i,\ell _i)$ is ltd with respect to $(\Delta ,r,s,K_0)$, and $\nu $-thick for some $\nu \ge \nu _0$ if $\alpha _i$ is a gap;
\item  $(\alpha _{i},\ell _{i})<(\alpha _{i+1},\ell 
_{i+1})$ 
for $i<R_{0}$, where the ordering $<$ is as in \ref{7.3}, that is, $\ell _i$ ends before $\ell _{i+1}$ starts, and $\alpha _i\cap \alpha _{i+1}\ne \emptyset $;
\item  each segment of $[y_{0},y_{T}]$ of 
length $\Delta _{0}$ intersects some $\ell _{i}$;
\item $a'(\alpha _i)\ge \delta _0$.
\end{itemize}
 
\end{utheorem}

\section{The proof of Theorem \ref{7.13}.}\label{6}

\subsection{Idea of the proof.}\label{7.0}
Throughout this chapter, we let $\{ y_t:t\in \mathbb R\} $ be a fixed Teichm\"uller geodesic, containing the segment $[y_0,y_T]$ which is the subject of Theorem \ref{7.13}. Here, $t$ parametrises length with respect to the Teichm\"uller metric. We write 
$y_{t}=[\varphi _t]=[\chi _{t}\circ \varphi _{0}]$,  with $\chi _{t}$ minimising distortion, with quadratic differential $q_0(z)dz^2$ and $y_0$ and stretch $q_t(z)dz^2$ at $y_t$ (see also \ref{2.2}).  We 
shall 
write $\vert .\vert _{t}$ for $\vert .\vert _{q_{t}}$ and $\vert 
\cdot \vert _{t,+}$ and $| \cdot |_{t,-}$ for the unstable and stable lengths $\vert \cdot \vert _{q_{t},+}$ and $|\cdot |_{q_t,-}$ respectively. (See \ref{5.1} for definitions.)

We shall prove Theorem \ref{7.13} by showing that, if $\Delta _{0}$ is 
sufficiently large, for  some $t_1\in [0,\Delta _0]$ and for some segment of unstable segment $\zeta _1\subset \varphi _{0}(\alpha _1)$,
and some $\xi \in \zeta _1$, for every Teichm\"uller geodesic segment $\ell $ of 
length $\Delta _{0}$, along $[y_{0},y_{T}]$, there is an ltd $(\alpha _i
,\ell _i)$
with $\ell _i\subset \ell $ and $\xi\in \varphi 
_{0}(\alpha _i)$. It then follows that the intersection   of all such $\varphi _{0}(\alpha 
_i)$ is nonempty, and therefore any two such $\alpha _i$ intersect essentially.
It is not  the case that any sequence $(\alpha _{i},\ell _{i})$ 
for $i\leq j$ is extendable, and this is the main obstacle that we have to overcome.

All the ltd's $(\alpha _j,\ell _j)$ have to be viewed in terms of $\zeta _1\cap \varphi _{0}(\alpha _j)$. In particular, we need to choose $\alpha _{j+1}$ so that 
$$\varphi _{0}(\alpha _j)\cap \varphi _{0}(\alpha _{j+1})\cap \zeta _1\ne \emptyset .$$
We transfer this to showing that, for a suitable $t_j$ such that $|\varphi _{t_j}(\partial \alpha _j)|$ is bounded at $[\varphi _{t_j}]$, and for a given  $\zeta _j\subset \zeta _1\cap \varphi _0(\alpha _j)$,
$$\varphi _{t_j}(\alpha _j)\cap \varphi _{t_j}(\alpha _{j+1})\cap \chi _{t_j}(\zeta _j)\ne \emptyset .$$
Basically, we will  need to choose $\alpha _{j+1}$ intersecting $\alpha _j$ so that $\varphi _{t_j}(\alpha _{j+1})\cap \chi _{t_j}(\zeta _j)$ is in the complement of a ``bad subset'' of $\varphi _{t_j}(\alpha _j)\cap \chi _{t_j}(\zeta _j)$, where the ``bad subset'' contains all  $\varphi _{t_j}(\beta )\cap \chi _{t_j}(\zeta _j)$ such that $\beta $ is bounded along a suitably chosen sufficiently long Teichm\"uller geodesic segment  $\ell \subset [y_{t_j},y_{t_j+\Delta _0}]$. This requires showing that the  bad subset is sufficiently small in a useful sense. One thing that we can do, as we shall see, is to bound  $a'(\beta )$ for these $\beta $. We use $a'(\beta )$ rather than $a(\beta )$, as $a'(\beta )$ records the area of a subset of $\varphi _t(S)$ which is constant as $t$ varies. But this is not enough, because an area bound is clearly not enough to bound  length of intersection with an arc $\chi _{t_j}(\zeta _j)\subset \varphi _{t_j}(S)$. We need to bound the set of such $\varphi _{t_j}(\beta )\cap \chi _{t_j}(\zeta _j)$ within a set of a certain shape: a union of intervals which are sufficiently short, and bounded apart by intervals which are sufficiently long. It turns out that we can do this, provided we enlarge the bad set in a certain way. The method involves a careful comparison between Poincar\'e distance along the surfaces of the Teichm\"uller geodesic $[y_0,y_T]$, and the distance determined by the different quadratic differentials $q_t(z)dz^2$. In particular, we will use properties of the graph of the {\em qd-length function} $\log |\varphi _t(\partial \beta )|_t$. We will do this in \ref{7.20}.

But first we recall the partial order properties of ltd's, which are derived as follows. The whole of the theory of ltd gaps and loops is based on a simple 
dynamical lemma which quantifies density of leaves of 
the stable and unstable foliations of a quadratic differential. This 
is basically 15.11 of \cite{R1}. But the statement is slightly more general. 

\begin{lemma}\label{7.1}Let a deceasing function $\varepsilon :(0,\infty )\to (0,\infty )$ be given. Then there is  a  function $L:(0,\infty )\times (0,\infty )\to (0,\infty )$ which is decreasing in the first coordinate and increasing in the second, such that the following holds. Let $[\varphi ]\in {\cal{T}}(S)$ and let $q(z)dz^2$ be a quadratic differential at $[\varphi ]$. Let $\alpha $ be a gap with $\varphi (\partial \alpha )$ in good position, with $a(\alpha )=a$ and $|\varphi (\partial \alpha )|_q\le M\sqrt{a}$. Let $J\subset \varphi (\alpha\cup\partial \alpha )$ be a segment of stable foliation with $|J|_q\ge \delta \sqrt{a}$. Then either every unstable segment in $\varphi (\alpha \cup \partial \alpha )$ of length $\ge L(\delta ,M)\sqrt{a}$ intersects $J\cup \varphi (\partial \alpha )$ or there is a closed loop $\gamma \subset {\rm{int}}(\alpha )$ with $|\varphi (\gamma )|_q\le L\sqrt{a}$ for  some $L\le L(\delta ,M)$, and $|\varphi (\gamma )|_{q,-}\le \varepsilon (L)\sqrt{a}$. 

Similar statements hold with the role of stable and unstable reversed.
\end{lemma}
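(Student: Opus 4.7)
By rescaling the quadratic differential I may normalize to $a(\alpha)=1$, so that $|\varphi(\partial\alpha)|_q\le M$ and $|J|_q\ge\delta$. I argue by contrapositive: assume some unstable segment $I\subset\varphi(\alpha\cup\partial\alpha)$ with $|I|_q\ge L$ is disjoint from $J\cup\varphi(\partial\alpha)$, and produce the required loop $\gamma\subset\mathrm{int}(\alpha)$, where $L=L(\delta,M)$ is chosen below.

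The key tool is a flow-box / Poincar\'{e}-recurrence argument. Set $\eta=\varepsilon(L)/2$ and, for each $x\in I$, flow along the stable foliation from $x$ on both sides until one of three events stops the motion: reaching stable $q$-distance $\eta$, hitting $\varphi(\partial\alpha)$, or reaching a singularity of $q$. Let $R$ be the resulting open stable neighborhood of $I$. Since $R\subset\varphi(\alpha\cup\partial\alpha)$, $\mathrm{area}_q(R)\le 1$. On the other hand, if $R$ were embedded, its $q$-area would be approximately $2\eta|I|$, minus a correction of $O(M\eta)$ from stable extensions truncated by $\partial\alpha$ (estimated by projecting the stopping points along the stable flow to $\varphi(\partial\alpha)$) and a further $O(\eta)$ from the finitely many singularities (bounded in number by the topological type of $S$). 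Assembling these, once $|I|\ge L$ exceeds an explicit threshold $L^{*}(M,\varepsilon)$, the flow-box cannot be embedded, and there must exist $x_1\ne x_2\in I$ on a common stable leaf at stable $q$-distance $\le 2\eta$.

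Concatenating the unstable sub-arc of $I$ from $x_1$ to $x_2$ with the short stable arc joining them gives a closed curve $\gamma$ with $|\varphi(\gamma)|_q\le L+2\eta\le 2L$ and $|\varphi(\gamma)|_{q,-}\le 2\eta\le\varepsilon(L)$; after absorbing the factor of $2$ into $L^{*}$, this yields the stated bounds. The asserted monotonicity of $L(\delta,M)$ is then immediate from this construction: the threshold $L^{*}$ depends only on $M$ and $\varepsilon$, so one takes $L(\delta,M)=L^{*}(M,\varepsilon)$, a function trivially decreasing in $\delta$ and visibly increasing in $M$ (more boundary length gives $I$ more room to escape before a collapse in the flow-box is forced).

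The principal obstacle is ensuring that the loop $\gamma$ produced is essential and lies in $\mathrm{int}(\alpha)$: the flow-box collapse could in principle produce a null-homotopic or peripheral loop, which would be useless for the conclusion. To handle this I would cut $\alpha$ along $\gamma$ and iterate the flow-box argument inside the sub-gap of strictly smaller topological type containing most of $I$; since the complexity strictly decreases at each iteration, this terminates in $O(-\chi(S))$ steps, each adding only an $O(1)$ penalty to $L$. The symmetric statement with the roles of stable and unstable reversed follows by applying the same argument to $-q(z)dz^{2}$, whose stable and unstable foliations are those of $q(z)dz^{2}$ interchanged.
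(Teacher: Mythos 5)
Your flow-box argument has an internal calibration problem that is fatal for the statement as given, and the tell-tale symptom is that your proof never uses the segment $J$ or the parameter $\delta$ at all: the stable neighbourhood of $I$ and its area estimate only involve $\partial\alpha$, so if your argument worked it would prove a $J$-free statement, which is false (take $\alpha=S$ with a uniquely ergodic vertical foliation and $J$ very short: long unstable segments missing $J$ exist, but no essential loop with the required properties need exist; correspondingly $L(\delta,M)$ genuinely must blow up as $\delta\to 0$, whereas your $L^{*}(M,\varepsilon)$ is independent of $\delta$). The quantitative root of the trouble is the self-referential requirement $|\varphi(\gamma)|_{q}\le L\sqrt{a}$ together with $|\varphi(\gamma)|_{q,-}\le\varepsilon(L)\sqrt{a}$ for the \emph{same} $L$, where $\varepsilon$ is an arbitrary decreasing function fixed in advance. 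With $a=1$ and $\eta\sim\varepsilon(L)$, your collapse criterion needs $2\eta(|I|-M)>1$, i.e. $L\gtrsim M+1/\varepsilon(L)$, and for rapidly decreasing $\varepsilon$ (say $\varepsilon(t)=e^{-t}$, or even $t^{-2}$) no such $L$ exists; absorbing factors of $2$ into $L^{*}$ does not help because the unstable part of the loop you build is a sub-arc of $I$ whose length is of the order of the return time $\sim 1/\eta$, so the loop's qd-length and its stable part are tied together in exactly the way the lemma forbids. (Your fix for essentialness has a secondary problem of the same kind: after cutting along $\gamma$ the sub-gap has area $a'$ possibly much smaller than $a$ and boundary containing $\gamma$, so the new ratio $M'=|\partial\alpha'|_{q}/\sqrt{a'}$ is not controlled, and the ``$O(1)$ penalty per step'' is unjustified. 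The null-homotopic case, by contrast, never occurs: a loop made of one unstable and one stable arc cannot bound a disc, by the cone-angle/Gauss--Bonnet count.)

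The paper's proof is organised precisely to evade this calibration trap, and that structure is the missing idea. One decomposes $J$ itself into at most $p$ subintervals $J_{1},\dots,J_{p}$ (with $p$ bounded by the topological type of $S$) over which the forward unstable flow returns to $J$ or exits through $\varphi(\partial\alpha)$ without meeting singularities, forming trapezia $R_{j}$ of width $x_{j}\sqrt{a}=|J_{j}|_{q}$ and height at most $(1/x_{j}+M)\sqrt{a}$. At each stage one runs a dichotomy comparing $\sum_{k>j}x_{k}$ with $\varepsilon(1/x_{j}+M)$: if the sum is always at least this, one gets the recursive lower bound $x_{j+1}\ge g(x_{j})$ with $g(x)=\varepsilon(1/x+M)/p$, and after at most $p$ steps the first alternative holds with $L(\delta,M)$ defined through the $p$-fold composition of $g$ starting from $x_{1}\ge\delta/p$ (this is where $\delta$ enters and why $L$ blows up as $\delta\to0$); if the dichotomy fails at some stage $j$, the loop is produced there, with qd-length controlled by the current height bound $(1/x_{j}+M)\sqrt{a}$ and stable part controlled by the total width of the remaining thin trapezia, which is less than $\varepsilon(1/x_{j}+M)\sqrt{a}$ by the failure of the dichotomy. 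Thus the bound $\varepsilon(\cdot)$ is calibrated per stage against the quantity that actually bounds the loop's length, rather than globally against a single threshold, and no unsolvable inequality of the form $L\gtrsim 1/\varepsilon(L)$ ever arises. Your area/pigeonhole mechanism is sound as far as it goes, but without an iteration of this kind (bounded in length by the topology, with the stable contribution measured inside $J$) it cannot yield the lemma for an arbitrary decreasing $\varepsilon$.
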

\begin{proof} By taking the oriented cover of the unstable foliation, we can assume without loss of generality that both the stable and unstable foliations are orientable. Then we fix a segment $J$ of stable foliation with $|J|_q\ge \delta \sqrt{a}$. We write $J=\cup _{i=1}^pJ_i$, and all unstable leaves starting from the interior of  $J_i$ in the positive direction either return to $J_i$ without hitting singularities, or cross $\varphi  (\alpha )$ without hitting singularities. Write $R_j$ for the trapezium with base $J_j$ formed in this manner. We number so that $|J_i|_q$ is decreasing in $i$. Then$|J_1|_q\ge \delta \sqrt{a}/p$ and hence all unstable segments in $R_1$ have unstable length $\le (p/\delta +M)\sqrt{a} $ if the segments cross $\varphi (\partial \alpha )$ --- using the bound $|\varphi (\partial \alpha )|_q\le M\sqrt{a}$ and one segment being of qd-length $\le p\sqrt{a}/\delta $. Similarly all unstable segments in $R_j$ have unstable length $\le a/|J_j|_q+M\sqrt{a}$.  Write $x_j=|J_j|_q/\sqrt{a}$. Either $\sum _{k>j}x_k\ge \varepsilon (1/x_j+M)$ -- in which case we also have $x_{j+1}\ge  \varepsilon (1/x_j+1/\delta )/p$ --- or there is a least $j$ such that $\sum _{k>j}x_k< \varepsilon (1/x_j+M)$. In the first case we can write $g(x)=\varepsilon (1/x+M)/p$ -- which is an increasing function of $x$ -- and we obtain $x_i\ge g^{i-1}(p/\delta +M )$ for all $i$, where $g^{i-1}$ is the $i$-fold composition. In the second case we obtain this for $i\le j$. So we obtain the result for $L(\delta ,M)=1/(g^{p-1}(p/\delta +M))$.
\end{proof}

\begin{corollary}\label{7.21}
Given $\delta >0$, the following holds for suitable 
ltd parameter functions $(\Delta ,r,s,K_{0})$ and for a suitable function $L(\delta 
,\nu )$. Let $\alpha $ be a gap which is long $\nu $-thick 
and dominant along a segment $\ell =[[\varphi _{1}],[\varphi _{2}]]$ 
and let $[\varphi ]\in \ell $  
with $d([\varphi ],[\varphi _{1}])\geq \Delta (\nu )$. Let 
$q(z)dz^{2}$ be the 
quadratic differential at $[\varphi ]$ for $d([\varphi ],[\varphi 
_{2}])$ with stable and unstable foliations $\cal{G}_{\pm }$. Let 
$a=a(\alpha,q)$. 

Then there is no segment of the unstable foliation of 
qd-length $\leq 2L(\nu ,\delta )\sqrt{a}$ with both ends on $\varphi (\partial 
\alpha )$, and every segment of the unstable foliation of qd-length 
$\geq L(\nu ,\delta )\sqrt{a}$ in $\varphi (\alpha )$ intersects 
every segment of stable foliation of length $\geq \delta \sqrt{a}$. 

Similar statements hold with the role of stable and unstable reversed.
\end{corollary}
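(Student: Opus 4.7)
The plan is to deduce the corollary from Lemma \ref{7.1} by ruling out its ``short-loop'' alternative using the ltd hypotheses and a controlled flow back along the geodesic to $[\varphi _1]$. First I note that the dominance condition $a(\partial \alpha )\le s(\nu )a$, together with the fact that each component of $\partial \alpha $ is the core of a flat annulus of area at most $s(\nu )a$, gives $|\varphi (\partial \alpha )|_q\le M\sqrt{a}$ with $M=C_0\sqrt{s(\nu )}$. Applying Lemma \ref{7.1} at $[\varphi ]$ with this $M$ and the given $\delta $, for any stable segment $J$ with $|J|_q\ge \delta \sqrt{a}$ I obtain the dichotomy: either every unstable segment in $\overline{\varphi (\alpha )}$ of qd-length $\ge L(\delta ,M)\sqrt{a}$ intersects $J\cup \varphi (\partial \alpha )$, or there is a closed loop $\gamma \subset \mathrm{int}(\alpha )$ with $|\varphi (\gamma )|_q\le L\sqrt{a}$ and $|\varphi (\gamma )|_{q,-}\le \varepsilon (L)\sqrt{a}$ for some $L\le L(\delta ,M)$.

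To rule out the second alternative I use the hypothesis $d([\varphi ],[\varphi _1])\ge \Delta (\nu )$. Under the flow back, the stable and unstable components of $\gamma $ scale by $e^{\Delta (\nu )}$ and $e^{-\Delta (\nu )}$ respectively, so
\[
|\varphi _1(\gamma )|_{q_1}\le e^{-\Delta (\nu )}L\sqrt{a}+e^{\Delta (\nu )}\varepsilon (L)\sqrt{a}.
\]
I feed into Lemma \ref{7.1} the decreasing function $\varepsilon (L)=e^{-2\Delta (\nu )}$ (depending only on $\nu $ and $\delta $) and choose $\Delta (\nu )$ large compared to $\log (L(\delta ,M)C(\nu )/\nu )$, making the right side $<C(\nu )^{-1}\nu \sqrt{a}$. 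But $\gamma $ is essential in $\alpha $ (being a closed loop in $\mathrm{int}(\alpha )$), so $\nu $-thickness forces $|\varphi _1(\gamma )|\ge \nu $, and the thick-part comparison of \ref{5.2} then gives $|\varphi _1(\gamma )|_{q_1}\ge C(\nu )^{-1}\nu \sqrt{a}$, a contradiction. This establishes the first alternative, which already yields the intersection claim for unstable segments lying in the interior of $\varphi (\alpha )$.

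For the no-short-returning-arc assertion the argument is similar. Suppose $I$ is an unstable segment of qd-length $\le 2L(\nu ,\delta )\sqrt{a}$ with both endpoints on $\varphi (\partial \alpha )$. Closing $I$ by a boundary arc produces a loop $\gamma $ in $\overline{\alpha }$; the degenerate possibilities that $\gamma $ bounds a disc or once-punctured disc are excluded by an index count for the unstable foliation on such a region, so $\gamma $ is essential. The key new point is that $\partial \alpha $ itself has exponentially small stable component at $[\varphi ]$: the bound $|\psi (\partial \alpha )|_{q_\psi }\le M\sqrt{a}$ at every point of the geodesic, combined with $|\varphi _1(\partial \alpha )|_{q_1,-}=e^{\Delta (\nu )}|\varphi (\partial \alpha )|_{q,-}$, forces $|\varphi (\partial \alpha )|_{q,-}\le e^{-\Delta (\nu )}M\sqrt{a}$. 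Since the stable component of $\gamma $ comes entirely from the boundary arc, flowing back yields
\[
|\varphi _1(\gamma )|_{q_1}\le e^{-\Delta (\nu )}(2L+M)\sqrt{a}+M\sqrt{a}.
\]
Choosing $s(\nu )$ small enough that $M<\nu /(2C(\nu ))$ and $\Delta (\nu )$ large enough, this contradicts $\nu $-thickness as before. The symmetric claim with stable and unstable reversed follows by running the same argument on the geodesic with the opposite orientation, with $[\varphi _1]$ replaced by $[\varphi _2]$ under the corresponding hypothesis $d([\varphi ],[\varphi _2])\ge \Delta (\nu )$.

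The main obstacle is the coordinated choice of parameter functions: $s(\nu )$ must be small enough for the boundary's qd-mass $M\sqrt{a}$ to fall below the $\nu $-thickness threshold, while $\Delta (\nu )$ must be taken large compared to $\log L(\delta ,M(s(\nu )))$, so that the exponential decay of the unstable component dominates both the hypothetical short loop supplied by Lemma \ref{7.1} and the residual stable contribution from $\partial \alpha $. Once these thresholds are fixed, the same $\Delta $ and $s$ simultaneously suppress the short-returning-arc alternative, producing the uniform function $L(\nu ,\delta )$.
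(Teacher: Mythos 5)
Your overall strategy is the paper's: apply Lemma \ref{7.1}, kill its short-loop alternative by flowing backwards inside the thick segment to contradict $\nu$-thickness, and handle returning unstable arcs by closing them up with boundary arcs (your remark that $|\varphi (\partial \alpha )|_{q,-}$ is exponentially small is exactly the right ingredient there). But there is a genuine gap in the order of quantifiers in your parameter choices, and as written the key estimate fails. Lemma \ref{7.1} takes the function $\varepsilon $ as \emph{input} and returns a bound $L(\delta ,M)$ that depends on $\varepsilon $. You feed in the constant function $\varepsilon \equiv e^{-2\Delta (\nu )}$, which depends on $\Delta (\nu )$; the construction in the proof of Lemma \ref{7.1} (via $g(x)=\varepsilon (1/x+M)/p$ and $L=1/g^{p-1}(p/\delta +M)$) then yields $L(\delta ,M)$ of order $e^{2\Delta (\nu )}$. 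Consequently the first term in your flow-back bound, $e^{-\Delta (\nu )}L\sqrt{a}$, is of order $e^{\Delta (\nu )}\sqrt{a}$ and is not small, and your closing requirement that $\Delta (\nu )$ be ``large compared to $\log (L(\delta ,M)C(\nu )/\nu )$'' is circular: $\log L(\delta ,M)\approx 2\Delta (\nu )$, so no choice of $\Delta $ satisfies it. The same circularity infects the returning-arc estimate $e^{-\Delta (\nu )}(2L+M)\sqrt{a}+M\sqrt{a}$.

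The repair is the paper's choice: take $\varepsilon (L)=C(\nu )L^{-1}$, a fixed function of $L$ independent of $\Delta $, so that $L(\delta ,M)$ is a genuine constant depending only on $\delta $, $\nu $ (and $M$). Then do not flow back the full distance $\Delta (\nu )$ (with your constant $\varepsilon $ that was forced on you, and with the paper's $\varepsilon $ it would make the expanded stable term $e^{\Delta (\nu )}\varepsilon (L)\sqrt{a}$ blow up); instead flow back an intermediate time of roughly $\log \bigl(L/\sqrt{C(\nu )}\bigr)$, chosen according to the scale $L$ of the hypothetical loop. At that point $[\psi ]\in [[\varphi _{1}],[\varphi ]]$ both the contracted unstable part and the expanded stable part are $O(\sqrt{C(\nu )}\sqrt{a})$, giving $|\psi (\gamma )|<\nu $ and the desired contradiction with thickness; the only condition on the parameter functions is the non-circular one that $\Delta (\nu )\gtrsim \log L(\delta ,M)$, i.e.\ $\Delta (\nu )$ large enough given $\delta $ and $\nu $. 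With that change your argument, including the returning-arc case (where you should also note, as the paper does, that the arc must be taken non-homotopic into $\partial \alpha $ before closing up), goes through.
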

\begin{proof} We apply the lemma with $\varepsilon (L)=C(\nu )L^{-1}$ for a suitable constant $C(\nu )$ relating the qd-metric and Poincar\'e metric, which ensures that if the second option $|\varphi (\gamma )|_q\le L\sqrt{a}$ and $|\varphi (\gamma )|_{q,-}\le \varepsilon (L)$ holds for $L$ bounded in terms of $\Delta (\nu )$ and $\gamma \subset \alpha $ then there is $[\psi ]\in [[\varphi _1],[\varphi ]]$ with $|\psi (\gamma )|<\nu $, contradicting $\nu $-thickness. Also if there is $\zeta \subset \alpha $ with endpoints on $\partial \alpha $ and not homotopic into the boundary such that $\varphi (\zeta )$ is a segment of unstable foliation and  $|\varphi (\zeta )|\le L(\nu ,\delta )$, then adding in arcs along $\partial \alpha $ we again obtain a loop $\gamma \subset \alpha $ and $[\psi ]\in [[\varphi _1],[\varphi ]]$ with $|\psi (\gamma )|<\nu $, which again gives a contradiction.\end{proof}

\subsection{Loops cut the surface into cells.}\label{7.2}
There are  two fairly simple, but key, results, both of 
which follow directly from \ref{7.1}. These properties 
are used several times in 
\cite{R1}, 
but may never be explicitly 
stated. The first may be reminiscent of the concept of tight 
geodesics 
in the curve complex developed by Masur and Minsky \cite{M-M2}, and 
the point may be that these occur ``naturally'' in Teichm\" uller 
space.

\begin{ulemma} Given $L>0$, there is a function $\Delta _{1}(\nu )$ 
depending only 
on the topological type of $S$, such that the following holds for 
suitable parameter functions $(\Delta ,r,s,K_{0})$ 
Let $\alpha $ be a gap which is long $\nu $-thick and dominant along 
$\ell$ for $(\Delta , r, s,K_{0})$, with $\Delta (\nu 
)\geq \Delta _{1}(\nu )$. Let $y_{1}=[\varphi _{1}]$, $y_{2}=[\varphi 
_{2}]\in 
\ell $ with $d(y_{1},y_{2})\geq \Delta _{1}(\nu )$. 
Let $\gamma _{i}\subset \alpha $ with $\vert \varphi _{i}(\gamma 
_{i})\vert \leq L$, $i=1$, $2$. 

Then $\alpha \setminus (\gamma 
_{1}\cup \gamma 
_{2})$ is a union of topological discs with at most one puncture and 
topological annuli parallel to the boundary. Furthermore, for a 
constant $C_{1}=C_{1}(L,\nu )$, 
$$\# (\gamma 
_{1}\cap \gamma _{2})\geq C_{1}\exp 
d(y_{1},y_{2}).$$\end{ulemma}

\begin{proof} Let $[\varphi ]$ be the midpoint of 
$[[\varphi _{1}],[\varphi _{2}]]$, and let 
$q(z)dz^{2}$ be the quadratic differential 
for $d([\varphi ],[\varphi _{2}])$ at $[\varphi ]$. As before, write $a=a(\alpha )$. 
Because $\vert \psi  (\gamma 
_{i})\vert \geq \nu $ for all $[\psi ]\in [[\varphi _{1}],[\varphi 
_{2}]]$, by \ref{5.2}, the good position  of $\varphi _{1}(\gamma 
_{1})$ satisfies 
$$\vert \varphi _{1}(\gamma _{1})\vert _{q,+}\geq C(L,\nu 
)\sqrt{a},$$
and similarly for $\vert \varphi _{2}(\gamma _{2})\vert _{q,-}$. So 
$$\vert \varphi (\gamma _{1})\vert _{ q,+}\geq C(L,\nu )e^{\Delta 
_{1}(\nu )/2}\sqrt{a},$$
$$\vert \varphi (\gamma _{2})\vert _{ q,-}\geq C(L,\nu )e^{\Delta 
_{1}(\nu )/2}\sqrt{a}.$$
Then \ref{7.1} implies that, given $\varepsilon $, if 
$\Delta _{1}(\nu )$ is large enough given $\varepsilon $, 
$\varphi (\gamma _{1})$ cuts every segment of stable 
foliation of $q(z)dz^{2}$ of qd-length $\geq \varepsilon 
\sqrt{a}$ and 
$\varphi (\gamma _{2})$ cuts every segment of unstable 
foliation of $q(z)dz^{2}$ of qd- length $\geq \varepsilon 
\sqrt{a}$. So 
components of $\varphi (\alpha )\setminus (\varphi (\gamma _{1})\cup 
\varphi (\gamma _{2}))$ have Poincar\'e diameter
$<\nu $ if $\Delta _{1}(\nu )$ is sufficiently large, and must be
topological discs with at most one puncture or boundary-parallel 
annuli.

The last statement also follows from \ref{7.1}. 
If $d(y_{1},y_{2})<\Delta 
_{1}(\nu )$, there is nothing to prove, so now assume that 
$d(y_{1},y_{2})\geq \Delta 
_{1}(\nu )$. It suffices to bound below the number of intersections 
of $\varphi (\gamma _{1})$  and $\varphi (\gamma _{1})$. Let $L(\nu 
,1)$ be as in \ref{7.1}, and assume without loss of generality that 
$L(\nu ,1)\geq 1$. Supppose that $\Delta _{1}(\nu )$ is large enough 
that  each of $\varphi (\gamma _{1})$ and $\varphi (\gamma _{2})$ 
contains 
at least one segment which is a qd-distance $\leq \sqrt{a}/L(\nu ,1)$ 
from 
unstable and stable segments, respectively, of  qd-length 
$\geq \sqrt{a}L(\nu ,1)$. Note that the number of singularities of 
the 
quadratic differential is bounded in terms of the topological type 
of $S$. So 
apart from length which is a  bounded multiple of $L(\nu 
,1)\sqrt{a}$, 
each of $\varphi (\gamma _{1})$ and $\varphi (\gamma _{2})$ is a 
union of such segments. Then applying \ref{7.1}, each such segment 
of $\varphi (\gamma _{1})$ intersects each such segment on 
$\varphi (\gamma _{2})$. So we obtain the result for 
$C_{1}=c_{0}L(\nu 
,1)^{-2}$, for $c_{0}$ depending only on the topological type of $S$.
\end{proof}

\subsection{A partial order on ltd $(\beta ,\ell )$.}\label{7.3}

The following consequence of \ref{7.1} is important for the implications of our main result, \ref{7.13}. It allows us to define a partial order on long thick and dominants $(\beta ,\ell )$. The meaning of this, in terms of our ``beads'' metaphor, is that as adjacent beads  representing $(\alpha _i,\ell _i)$ and $(\alpha _{i+1},\ell _{i+1})$ of the sequence of \ref{7.13} cannot slide past each other, no two distinct beads representing $(\alpha _i,\ell _i)$ and $(\alpha _j,\ell _j)$ can change positions on the string. (Of course, if they could do so, the metaphor would make no physical sense.)
\begin{ulemma} For $i=1$, $3$, let $y_{i}=[\psi _{i}]\in \ell _{i}$, 
and 
let $\beta _{i}$ be a subsurface of $S$ with $\vert \psi 
_{i}(\partial 
\beta _{i})\vert \leq L$. Let  ltd parameter 
functions be suitably chosen given $L$. 
Let  $\ell _{2}\subset [y_{1},y_{3}]$ and  let $\beta 
_{2}\cap \beta _{i}\not = \emptyset $ for both $i=1$, $3$, and let 
$\beta _{2}$ 
be ltd along $\ell _{2}$. Then $\beta _{1}\cap \beta _{3}\not = 
\emptyset $, and $\beta _{2}$ is in the convex hull of 
$\beta _{1}$ and $\beta _{3}$. \end{ulemma}
\begin{proof}
This is obvious unless both $\partial 
\beta _{1}$ and $\partial \beta _{3}$ intersect the interior of 
$\beta 
_{2}$. So now suppose that they both do this.
First suppose that $\beta _{2}$ is a gap and long, $\nu $-thick and 
dominant. Let $y_{2,1}=[\psi _{2,1}]$, $y_{2,3}=[\psi _{2,3}]\in \ell 
_{2}$ 
with $y_{2,i}$ separating $\ell 
_{i}$ from $y_{2}$, with $y_{2,i}$ distance $\geq {1\over 3}\Delta 
(\nu)$ from 
the ends of $\ell _{2}$ and from $y_{2}$. If $\beta _{2}$ is a loop, 
then we can take these distances to be $\geq {1\over 6}\log K_{0}$. 
For $[\psi ]\in[y_{-},y_{+}]$, let $\psi (\beta )$ denote the 
region bounded by $\psi (\partial 
\beta )$ and homotopic to $\psi (\beta )$, assuming $\psi (\partial 
\beta )$ is in good position with respect to the quadratic 
differential at $[\psi ]$ for $[y_{-},y_{+}]$.
Then if $\beta _{2}$ is a gap, 
$\psi _{2,1}(\partial \beta _{1}\cap \beta _{2})$ 
includes a union of segments  in approximately unstable direction, 
of Poincar\' e length bounded from $0$, and similarly for 
$\psi _{2,3}(\partial \beta _{3}\cap \beta _{2})$, with 
unstable replaced by stable. Then as in \ref{7.2}, 
 $\psi _{2}(\partial \beta _{3}\cap \beta _{2})$ 
and $\psi _{2}(\partial \beta _{1}\cap \beta _{2})$
cut $\psi _{2} (\beta _{2})$ into topological discs with at 
most one puncture and annuli parallel to the boundary. It follows 
that $\beta _{2}$ is contained in the convex hull of $\beta _{1}$ and 
$\beta _{3}$. If $\beta _{2}$ is a loop, it is simpler. We replace 
$\psi (\beta _{2})$ by the maximal flat annulus $S([\psi ])$ 
homotopic 
to $\psi (\beta _{2})$, for $[\psi ]\in \ell _{2}$. Then $\psi 
_{2}(\partial \beta _{1})\cap S([\psi _{2}])$ is in approximately 
the unstable direction and $\psi 
_{2}(\partial \beta _{3})\cap S([\psi _{2}])$ in approximately the 
stable direction. They both cross $S([\psi _{2}]$), so must intersect 
in a loop homotopic to $\psi _{2}(\beta _{2})$. \end{proof}

We  define $(\beta _{1},\ell _{1})<(\beta _{2},\ell _{2})$ 
if 
$\ell _{1}$ is to the left of $\ell _{2}$ (in some common geodesic 
segment) and $\beta _{1}\cap \beta _{2}\not = \emptyset $. We can 
make 
this definition for any segments in a larger common geodesic segment, 
and even for single points in a common geodesic segment. So in the 
same way we can define $(\beta _{1},y_{1})< (\beta _{2},\ell _{2})$ 
if $y_{1}$ is to the left of $\ell _{2}$, still with  
$\beta _{1}\cap \beta _{2}\not = \emptyset $, and so on. This 
ordering is transitive restricted to ltd's $(\beta _{i},\ell _{i})$
by the lemma. 

\subsection{The graph of the qd-length function.}\label{7.20}

One of the basic technical considerations in the study of 
Teichm\"uller geodesics, as is probably already apparent, is the 
difference between the qd- and Poincar\'e metrics. The two metrics 
are not globally Lipschitz equivalent. But they are Lipschitz equivalent, 
up  to scalar, on any thick part of a surface. The Lipschitz constant 
is bounded in terms of the topological type of the surface, but the 
scalar 
is completely uncontrollable. This should not be regarded as a 
problem. One simply has to look at ratios of lengths rather than at 
absolute lengths. 

We consider a fixed geodesic segment $[y_0,y_T]$, as in \ref{7.0} and \ref{7.13}. We use the notation of \ref{7.0}, so that $y_t=[\varphi _t]=[\chi _t\circ \varphi _0]$, and $q_t(z)dz^2$ is the quadratic differential at $y_t$ for $d(y_0,y_t)$: the stretch of $q_0(z)dz^2$ at $y_0$ (see \ref{2.2}). For 
any 
finite loop set $\gamma $, we define the {\em{qd-length function for $\gamma $}} by 
$$F(t,\gamma )=\log \vert \varphi _{t}(\gamma )\vert 
_{t}.$$
 By 14.7 of 
\cite{R1} (and I am sure this is well-known), this function has a remarkable property. There is $t(\gamma )\in 
\mathbb R$, and  a constant $C_{0}$ depending only on  the 
topological type of $S$ and a bound on the number of loops in 
$\gamma $,  such that
\begin{equation}\label{7.20.1}
    \vert F(t,\gamma )-F(t(\gamma ),\gamma )-\vert t-t(\gamma )\vert \vert \leq 
    C_{0}.\end{equation}
 As a consequence of this, and of the theory of section \ref{5}, we have the following. As already noted in \ref{7.0}, this will not be sufficient for our purposes, but it is a start, and is, in fact, used in the construction of $\alpha _1$.

\begin{lemma}\label{7.6} Fix a topological surface $S$ and ltd parameter functions $(\Delta ,r,s,K_0)$ and $\nu _0>0$. The following holds for  $C_1>0$ depending only on the topological type of $S$, and constants $s_0$ and $L$ depending on the topological type of  $S$ and $(\Delta ,r,s,K_0)$ and $\nu _0$, where $L$ is as in \ref{5.5}, and for  any sufficiently large $\Delta _1>0$ given these.  Let $[y_0,y_T]=[[\varphi _0],[\varphi _T]]=\{ [\varphi _t]:t\in [0,T]\} $ be a Teichm\"uller geodesic segment, and  $\ell \subset [y_0,y_T]$ a segment of length $\ge \Delta _1$. Let $\beta $ be disjoint from all subsurfaces $\alpha $ of $S$  such that $(\alpha ,\ell ')$ is ltd with respect to $(\Delta ,r,s,K_0,\nu _0)$ for some $\ell '\subset \ell $. Then for all $t\in[0,T]$,
\begin{equation}\label{7.13.3}s_0a'(\beta )\le |\varphi _t(\partial \beta )|_t^2,\end{equation}
and hence
 \begin{equation}\label{7.13.4}a'(\beta )\leq C_{1}s_0^{-1}L^{2}e^{-\Delta 
_{1}}.\end{equation}
\end{lemma}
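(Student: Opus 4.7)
I aim to prove the two inequalities in order: (\ref{7.13.3}) first, then (\ref{7.13.4}), with the latter following from (\ref{7.13.3}) combined with the Poincar\'e bound from Lemma \ref{5.5} and the V-shape of the qd-length function encoded by (\ref{7.20.1}).

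For (\ref{7.13.3}), I argue by contradiction, invoking Lemma \ref{5.4}. Suppose $|\varphi_{t^{*}}(\partial\beta)|_{t^{*}}^{2} < s_{0}\, a'(\beta)$ at some $t^{*}\in[0,T]$. The function $F(t,\partial\beta) = \log|\varphi_{t}(\partial\beta)|_{t}$ is V-shaped with approximate minimum at $t(\partial\beta)$ and is 1-Lipschitz up to the additive constant $C_{0}$ of (\ref{7.20.1}). Hence on a length-$\Delta_{0}$ segment centred on $t^{*}$ (the $\Delta_{0}$ from the proof of Lemma \ref{5.4}), suitably shifted into $\ell$ if $t^{*}$ is near an endpoint, one has $|\varphi_{t}(\partial\beta)|_{t}^{2} \leq s_{0}\, a'(\beta)\, e^{2\Delta_{0}+2C_{0}}$ throughout. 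Since $(\beta,\ell)$ is bounded by $L$ (Lemma \ref{5.5}), the Poincar\'e length $|\varphi_{t}(\partial\beta)|$ is bounded by $L$ along $\ell$, so by \ref{5.2} the qd-area $a(\partial\beta,q_{t})$ is boundedly proportional to $|\varphi_{t}(\partial\beta)|_{t}^{2}$. Choosing $s_{0}$ small enough (exponentially in $\Delta_{0}$) to absorb this Lipschitz factor along with all constants converting between $a(\beta,q_{t})$, $a'(\beta)$ and qd-length squared, one obtains $a(\partial\beta,q_{t})/a(\beta,q_{t}) \leq s_{0}^{(\ref{5.4})}$ throughout the segment, where $s_{0}^{(\ref{5.4})}$ is the constant provided by Lemma \ref{5.4}. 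Applying Lemma \ref{5.4} to $\omega = \beta$ on this segment then produces an ltd subsurface $\alpha\subset\beta$ along some $\ell'\subset\ell$, contradicting the hypothesis that $\beta$ is disjoint from all such ltds.

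For (\ref{7.13.4}), the Poincar\'e bound $|\varphi_{t_{\pm}}(\partial\beta)|\leq L$ at the endpoints $t_{\pm}$ of $\ell$ combined with \ref{5.2} gives $|\varphi_{t_{\pm}}(\partial\beta)|_{t_{\pm}}^{2}\leq C_{3} L^{2}$ in the qd-metric. By the V-shape (\ref{7.20.1}) with $t_{+}-t_{-}\geq \Delta_{1}$, the minimum $t_{0}$ of $F(\cdot,\partial\beta)$ on $[0,T]$ satisfies $|\varphi_{t_{0}}(\partial\beta)|_{t_{0}}^{2}\leq C_{4} L^{2} e^{-\Delta_{1}}$: when $t_{0}\in[t_{-},t_{+}]$ this follows from the geometric-mean estimate $F(t_{0})\leq \tfrac{1}{2}(F(t_{-})+F(t_{+}))-\tfrac{\Delta_{1}}{2}+C_{0}$, and when $t_{0}\notin[t_{-},t_{+}]$ from monotonicity of $F$ across $\ell$ giving $F(t_{0})\leq F(t_{\pm})-\Delta_{1}+C_{0}$ for the endpoint on the far side. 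Applying (\ref{7.13.3}) at $t_{0}$ yields $s_{0}\, a'(\beta)\leq C_{4} L^{2} e^{-\Delta_{1}}$, which is (\ref{7.13.4}) with $C_{1} = C_{4}$.

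\textbf{Main obstacle.} The delicate point is the case in (\ref{7.13.3}) when the approximate minimum $t(\partial\beta)$ lies outside $\ell$, or inside $\ell$ but very close to an endpoint. One must then transfer smallness of $|\varphi_{t(\partial\beta)}(\partial\beta)|_{t(\partial\beta)}^{2}$ into a smallness on a length-$\Delta_{0}$ segment that is actually contained in $\ell$, paying an additional Lipschitz penalty in the function $F$. This forces $s_{0}$ to be chosen exponentially small in $\Delta_{0}$, though still independent of $\Delta_{1}$ and $T$, which is precisely why the factor $s_{0}^{-1}$ appears in (\ref{7.13.4}).
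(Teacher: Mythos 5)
Your route is the paper's route: get a Poincar\'e bound $|\varphi_t(\partial\beta)|\le L$ on $\ell$ from \ref{5.5}, convert to qd-length, use the V-shape property (\ref{7.20.1}) to locate a time where $|\varphi_t(\partial\beta)|_t\le CLe^{-\Delta_1/2}$, obtain (\ref{7.13.3}) from the last part of \ref{5.4} via the no-ltd hypothesis, and combine; your derivation of (\ref{7.13.4}) is essentially the paper's. The genuine gap is in your argument for (\ref{7.13.3}) as stated, i.e.\ for \emph{every} $t^*\in[0,T]$. You move a length-$\Delta_0$ window centred at the bad time $t^*$ ``into $\ell$'' at a Lipschitz cost $e^{2\Delta_0+2C_0}$ and absorb that into $s_0$. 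This only works when $t^*$ lies in $\ell$ or within bounded distance of it. But $t^*$ (equivalently the vertex $t(\partial\beta)$ of the V) ranges over all of $[0,T]$ and may lie at distance up to $T$ from $\ell$; the transfer then costs $e^{2\,\mathrm{dist}(t^*,\ell)}$, which cannot be absorbed into an $s_0$ independent of $T$ and $\Delta_1$. In that regime your contradiction evaporates: smallness of $a(\partial\beta)/a(\beta)$ near $t^*$ produces, via \ref{5.4}, an ltd inside $\beta$ along a segment near $t^*$, which need not be a subsegment of $\ell$, so the hypothesis is not violated. Your ``main obstacle'' paragraph notices the vertex-outside-$\ell$ case, but the proposed fix (take $s_0$ exponentially small in $\Delta_0$) does not address it, since the required penalty depends on the position of $t^*$, not on $\Delta_0$. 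To be fair, the paper's own justification of (\ref{7.13.3}) is a one-line appeal to \ref{5.4} and is silent on the same point; but what your window argument genuinely proves is (\ref{7.13.3}) for $t\in\ell$, and that is all that is needed for (\ref{7.13.4}): instead of evaluating at the minimum $t_0$ of $F$ over $[0,T]$, evaluate at the point of $\ell$ nearest the vertex (the vertex itself if it lies in $\ell$, otherwise the endpoint of $\ell$ on that side), where your own monotonicity estimate already gives the $e^{-\Delta_1}$ bound.

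Two secondary omissions. First, you invoke \ref{5.5} to bound $|\varphi_t(\partial\beta)|$ directly for $\beta$, but \ref{5.5} controls the \emph{maximal} subsurface disjoint from the ltds (interior curves of it are controlled only multiplicatively, not bounded above), so you need the paper's preliminary step of enlarging $\beta$ so that $\partial\beta$ lies in the convex hull of the relevant $\partial\alpha$; this is harmless since $a'$ only increases under enlargement. Second, your ``constants converting between $a(\beta,q_t)$, $a'(\beta)$ and qd-length squared'' are bounded only when $\beta$ is a gap, where $a'(\beta)=a(\beta,q_t)$; when $\beta$ is a loop the ratio $a'(\beta)/a(\beta,q_t)$ is essentially the modulus of the maximal flat annulus --- exactly the quantity at stake --- and the last part of \ref{5.4} does not apply to a loop. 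There the correct contradiction is that $|\varphi_t(\beta)|_t^2<s_0a'(\beta)$ on a long enough subsegment of $\ell$ forces $\beta$ to be $K_0$-flat, hence itself ltd, along that subsegment, contradicting disjointness.
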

\begin{proof} Enlarge $\beta $ if necessary, so that $\partial \beta $ is contained in the convex hull of the union of $\partial \alpha $ such that $(\alpha ,\ell ')$ is ltd and $\ell '\subset \ell $. By \ref{5.5}, for $[\varphi ]\in \ell $:
\begin{equation}\label{7.13.2}\vert \varphi (\partial \beta )\vert 
\leq 
L.\end{equation}
 So if $\ell \subset [y_0,y_T]$, the function $F(t,\partial \beta )=|\varphi _t(\partial \beta )|_t$ is bounded, in terms of $L$, on an interval of length $\ge \Delta _1$.  There is also a constant $C_0$ depending only on the topological type of $S$ such that 
 $$|\varphi (\gamma )| _q\le C_0|\varphi (\gamma ) |$$
 for any closed loop $\gamma $ on $S$ and $[\varphi ]\in {\cal{T}}(S)$ and quadratic differential $q(z)dz^2$ at $[\varphi ]$. We apply this with $\varphi =\varphi _t$ and $q=q_t$ for varying $t$. So by (\ref{7.20.1}), there is at least one $t$ such that
$$|\varphi _t(\partial \beta )|_t\le C_1e^{-\Delta _1/2}L$$
Let $s_0=s_0(\Delta ,r,s,K_0)>0$ as in \ref{5.4}. By the last part of \ref{5.4}, we have
\begin{equation}\label{7.13.1}e ^{2F(t,\partial \beta )}=\vert \varphi 
_{t}(\partial \beta 
)\vert _{t}^{2}\geq  s_{0}a'(\beta ),\end{equation} 
which gives (\ref{7.13.3}).  But $a'(\beta )$ is independent of $t$. So (\ref{7.13.4}) follows, with $C_1=C_0^2$.
\end{proof} 

\subsection{Comparision between Poincar\'e length and qd-length}\label{7.7}

Comparision between Poincar\'e and qd-length can  be made as 
follows. Given $L_{1}>0$ there is $L_{2}\in \mathbb 
R$ such that if
\begin{equation}\label{7.20.2}
    \vert \varphi _{t}(\gamma )\vert \leq 
L_{1}\end{equation}
    then 
    \begin{equation}\label{7.20.3} F(t,\gamma )-F(t,\gamma ')\leq 
L_{2}\end{equation}
for all nontrivial 
nonperipheral $\gamma '$ intersecting $\gamma $ transversely. 
Conversely, given $L_{2}\in \mathbb R$, there is $L_{1}$ such that 
(\ref{7.20.2}) holds whenever (\ref{7.20.3}) holds for all $\gamma '$ 
intersecting $\gamma $ transversely. There is a similar 
characterisation of short loops. Given $L_{2}<0$, there is $L_{1}>0$ 
(which is small if $L_{2}$ is negatively large)
such that, whenever (\ref{7.20.2}) holds, then (\ref{7.20.3}) holds  
for all $\gamma '$ intersecting 
$\gamma $ tranversely. Conversely, given $L_{1}>0$, there is $L_{2}$ 
(which is negative if $L_{1}$ is small) such that (\ref{7.20.2}) holds for 
$\gamma $, whenever (\ref{7.20.3}) holds for $\gamma $ and all 
$\gamma '$ transverse to $\gamma $.

 We now start to deal with the problem we identified in \ref{7.0}:  given $t$, how to bound the intersection of $\varphi _{t}(\beta )$ with a  segment $\zeta _1$ of unstable foliation of bounded Poincar\'e length, for varying $t$. In the following series of lemmas \ref{7.22} - \ref{7.15}, we split up the segments of unstable foliation across $\varphi _t(\beta )$ into sets which are dealt with separately, some of them encased in larger subsurfaces. We then have to make estimates on these larger subsurfaces, and also on the number of them.
 
 \begin{lemma}\label{7.22} Let $\gamma $ be an arc such that $\varphi _t(\gamma )$ is a segment of unstable foliation for one, and hence all, $t$. Given $L$ there is $L_1$ such that if $\varphi _t(\gamma )$ has Poincar\'e length $\le L$ then $\varphi _u(\gamma )$ has Poincar\'e length $\le L_1$ for all $u\le t$. Moreover, if $\varepsilon _0$ is any Margulis constant  then given $C>0$ there exists $C_1>0$ such that, if  $\gamma \subset (\varphi _t(S))_{<\varepsilon _0}$ or $\gamma \subset (\varphi _t(S))_{\ge \varepsilon _0}$, and $\varphi _t(\gamma )$ has Poincar\'e length $\le C$ times the injectivity radius, then $\varphi _u(\gamma )$ has Poincar\'e length $\le C_1$ times the injectivity radius, for all $u\le t$. 
 
 Similar statements  hold for stable segments and $u\ge t$.\end{lemma}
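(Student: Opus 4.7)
The strategy is three-fold: convert the Poincar\'e bound to a qd-bound at time $t$, use the exponential contraction of unstable qd-length under the Teichm\"uller flow to get a qd-bound at time $u\le t$, and then convert back to a Poincar\'e bound at time $u$. The first step uses the universal inequality $|\varphi(\gamma)|_q\le C_0|\varphi(\gamma)|$ stated in \ref{7.6}, giving $|\varphi_t(\gamma)|_t\le C_0L$. Since the unstable foliations of all $q_u(z)dz^2$ along the geodesic coincide, and the Teichm\"uller map between $\varphi_t$ and $\varphi_u$ contracts the unstable direction by the factor $e^{-(t-u)}$, we obtain
\[|\varphi_u(\gamma)|_u=e^{-(t-u)}|\varphi_t(\gamma)|_t\le C_0L\]
for every $u\le t$, so the qd-length is uniformly bounded in $u$.

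The third step is the substantive one, since in thin annuli the qd-metric can be orders of magnitude smaller than the Poincar\'e metric. The remedy exploits that $\varphi_u(\gamma)$ is a straight unstable leaf. I would decompose $\varphi_u(\gamma)$ into its intersections with the thick components of $(\varphi_u(S))_{\ge\varepsilon_0}$ and with the Margulis tubes of $(\varphi_u(S))_{<\varepsilon_0}$. On a thick component \ref{5.1} and \ref{5.2} give a direct comparison, with constants depending only on the topological type of the component and on $a(\alpha,q_u)$. Inside a Margulis tube the qd-metric is approximately Euclidean on a flat annulus and $\varphi_u(\gamma)$ crosses along a constant-slope line, whose Poincar\'e length is controlled by its qd-length by a universal factor independent of the modulus, since a straight Euclidean crossing of a flat annulus is a near-geodesic in the Poincar\'e metric as well. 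Because the number of thick components and Margulis tubes is bounded by topological type and the total qd-area is $1$, summing yields $|\varphi_u(\gamma)|\le L_1$.

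For the injectivity radius refinement I would argue by cases on whether $\gamma$ lies in the thick or thin part of $\varphi_t(S)$. In the thick case the injectivity radius along $\gamma$ is bounded below by $\varepsilon_0/2$, so the hypothesis reduces to an absolute Poincar\'e bound and the first statement applies; it then suffices to verify that the injectivity radius at points of $\varphi_u(\gamma)$ does not collapse by more than a bounded factor, which follows from the universal comparison between qd and Poincar\'e on the thick parts of $\varphi_u(S)$. In the thin case, $\gamma$ sits in a single Margulis tube $A_t$; the core length varies monotonically with $u$ (decreasing or increasing depending on whether the core is unstable- or stable-dominated for $q_t(z)dz^2$), and the associated injectivity radius at points of $\varphi_u(\gamma)$ evolves at essentially the same exponential rate as the qd-contraction of Step~2, so the ratio $|\varphi_u(\gamma)|/\mathrm{inj}_u$ remains bounded. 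The main obstacle is precisely this last tracking: as $u$ decreases the thick/thin decomposition of $\varphi_u(S)$ can change dramatically, and one must verify that an unstable leaf which is ``short relative to the injectivity radius'' at time $t$ has not migrated into a different thin region with a vastly different scale at time $u$. The statements for stable segments and $u\ge t$ follow by reversing the Teichm\"uller flow and the roles of stable and unstable.
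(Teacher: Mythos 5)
There is a genuine gap, and it is exactly at the step you flag as ``the substantive one''. Your plan is to pass from a Poincar\'e bound at time $t$ to an absolute qd-length bound $|\varphi_u(\gamma)|_u\le C_0L$ at all earlier times, and then convert back to Poincar\'e length at time $u$. The conversion back cannot be done with controlled constants: on a thick component $\alpha$ of $\varphi_u(S)$ the Poincar\'e length is comparable to qd-length divided by $\sqrt{a(\alpha,q_u)}$, and $a(\alpha,q_u)$ has no lower bound --- this is precisely the point made in \ref{7.20}, that the qd and Poincar\'e metrics are Lipschitz equivalent on thick parts only \emph{up to a scalar which is completely uncontrollable}. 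You acknowledge the constant depends on $a(\alpha,q_u)$ but never control it. The Margulis-tube claim is also false as stated: a straight crossing of the flat annulus has qd-length proportional to the (uncontrolled) qd-circumference of the tube, while its Poincar\'e length is of the order of the logarithm of the modulus, so there is no universal factor bounding Poincar\'e length by qd-length there either. In short, an absolute bound on $|\varphi_u(\gamma)|_u$ gives essentially no information about $|\varphi_u(\gamma)|$, so the first statement does not follow from Steps 1--3. The same difficulty resurfaces in your treatment of the injectivity-radius refinement, where you explicitly leave open the ``tracking'' of the thin--thick decomposition as $u$ decreases; that unresolved point is the whole content of the lemma.

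The fix is to argue with ratios rather than absolute qd-lengths, which is how the paper proceeds. First reduce the absolute statement to the injectivity-radius statement (a segment of Poincar\'e length $\le L$ splits into boundedly many subsegments each of length at most the injectivity radius at one of its points), i.e.\ the reduction goes in the opposite direction to yours. Then, for a segment $\gamma$ short compared with the injectivity radius at time $t$, compare it with any closed loop $\zeta$ crossing it: since $\varphi_u(\gamma)$ is an unstable segment, $\log|\varphi_u(\gamma)|_u$ decreases at exactly unit rate as $u$ decreases, while $\log|\varphi_u(\zeta)|_u$ decreases at most at unit rate, so $\log|\varphi_u(\gamma)|_u-\log|\varphi_u(\zeta)|_u$ does not increase as $u$ decreases below $t$. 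Combined with the comparison in \ref{7.7} between Poincar\'e shortness and qd-length ratios, this keeps $\gamma$ short relative to the local injectivity radius for all $u\le t$, with no need to know where $\gamma$ sits in the thin--thick decomposition of $\varphi_u(S)$. Your Steps 1 and 2 are fine but, on their own, do not substitute for this relative comparison.
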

 
 \begin{proof} Since a segment of length $\le L$ can be split up into a number of segments of length bounded by the injectivity radius, where the number is bounded in terms of $L$, the first statement follows from the second. So take any such segment $\gamma $ of length bounded by the injectivity radius at a point on $\gamma $, and take any closed loop $\zeta $ intersecting $\gamma $. Then $\log |\varphi _u(\gamma )|_u-\log |\varphi _u(\zeta )|_u$ is non-increasing for $u\le t$. The result follows.\end{proof}
 
 \begin{lemma}\label{7.19} Given $\eta _1>0$ and $L>0$ there exists $\eta _2>0$ depending only on $\eta _1$, $L$ and the topological type of $S$, such that the following holds for any sufficiently large $\Delta _1>0$. Let $\beta $ be a gap or loop with $|\varphi _t(\partial \beta )|\le L$ and $a'(\beta )\le e^{-\Delta _1}$. Then there is $\beta '\supset \beta $ such that $a'(\beta ')\le e^{-(1-\eta _1)\Delta _1}$ and $|\varphi _u(\partial \beta ')|\le e^{-\eta _2\Delta _1}$ for $|u-t|\le \eta _2\Delta _1$.\end{lemma}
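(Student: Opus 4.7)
The plan is to construct $\beta '$ as a suitable $q_t$-neighborhood of $\varphi _t(\beta )$ at time $t$, to convert the resulting $q_t$-length bound on $\partial \beta '$ into a Poincar\'e length bound via the modulus--length relation of \ref{2.3} and \ref{7.7}, and to propagate the bound across $|u-t|\le \eta _2\Delta _1$ using the V-shape (\ref{7.20.1}) of the qd-length function $F(\cdot ,\partial \beta ')$. The loop case is essentially immediate: the hypotheses combined with the modulus--length relation already force $|\varphi _t(\beta )|_t\le C L\,e^{-\Delta _1/2}$ (the maximal annulus around $\varphi _t(\beta )$ has modulus of order at least $1/L^2$ and $q_t$-area at most $e^{-\Delta _1}$), so one may take $\beta '=\beta $ and only the V-shape step is needed. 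Thus I focus on the gap case.

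For a gap $\beta $, set $N_r=\{x\in \varphi _t(S):d_{q_t}(x,\varphi _t(\beta ))\le r\}$ and $A(r)=a(N_r,q_t)$, so that $A(0)=a'(\beta )\le e^{-\Delta _1}$ and $A(r)\le 1$. The engine of the argument is the coarea identity
\[\int _0^R|\partial N_r|_t\,dr=A(R)-A(0).\]
Choose $R$ to be the first radius at which $A(R)=e^{-(1-\eta _1)\Delta _1}$ (or a convenient preset bound if $A$ never reaches this level). A pigeonhole argument produces some $r_0\in [0,R]$ with $|\partial N_{r_0}|_t\le A(R)/R$, and, provided $R$ is of order at least $e^{-(1-\eta _1-2\eta _2)\Delta _1}$, this is at most $e^{-2\eta _2\Delta _1}$.

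The main technical obstacle is to guarantee that $R$ is this large. If $A$ jumps across the threshold $e^{-(1-\eta _1)\Delta _1}$ in a very short $r$-interval, the plain coarea average is too crude. I expect to handle this by a dyadic analysis across the levels $A(r)=2^ka'(\beta )$ for $k$ of order at most $\eta _1\Delta _1/\log 2$: either some doubling band $[R_{k-1},R_k]$ is long enough for coarea on that band to supply the required $r_0$, or every band is short, in which case each jump in $A$ is caused by $N_r$ absorbing a topological component bounded by a closed curve of small $q_t$-length, and such an absorbed curve can itself be used as a component of $\partial \beta '$. Taking the subsurface hull (absorbing inessential and peripheral components of $\partial N_{r_0}$), one obtains $\beta '\supset \beta $ with $a'(\beta ')\le A(r_0)\le e^{-(1-\eta _1)\Delta _1}$ and $|\varphi _t(\partial \beta ')|_t\le e^{-2\eta _2\Delta _1}$.

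The converse direction of \ref{7.7} then upgrades this to a Poincar\'e bound at time $t$: any loop $\gamma '$ transverse to a boundary component $\gamma \subset \partial \beta '$ must cross the collar in $N_{r_0}$, which forces $F(t,\gamma ')-F(t,\gamma )$ to be positively large, so $|\varphi _t(\gamma )|\le C_1 e^{-2\eta _2\Delta _1}$. Finally, (\ref{7.20.1}) gives $F(u,\gamma )\le F(t,\gamma )+|u-t|+2C_0$, hence $|\varphi _u(\gamma )|_u\le e^{-\eta _2\Delta _1+2C_0}$ for $|u-t|\le \eta _2\Delta _1$. The same collar/modulus conversion is valid at time $u$ because $\chi _u\circ \chi _t^{-1}$ preserves $q$-area and so preserves the modulus of the collar up to a bounded factor (with Poincar\'e behaviour of the collar under the flow controlled by \ref{7.22}); this yields $|\varphi _u(\partial \beta ')|\le e^{-\eta _2\Delta _1}$ after absorbing the universal constants into a slight decrease of $\eta _2$, which is permissible since $\eta _2$ is allowed to depend on $\eta _1$, $L$ and the topological type of $S$.
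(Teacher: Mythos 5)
The central gap is the dichotomy on which your dyadic analysis rests. The function $A(r)$ is continuous (it is the integral of $|\partial N_r|_t$), so there are no ``jumps'', and rapid growth of $A$ across a doubling band is not caused by $N_r$ absorbing a component bounded by a closed curve of small $q_t$-length: it is typically caused by $|\partial N_r|_t$ simply becoming large. For example, the moment the neighbourhood reaches a nearby thick piece of definite $q_t$-area, $|\partial N_r|_t$ can be of order $1$ at an arbitrarily small radius; then every band is short and no short closed curve is produced at any radius. Since a tiny-area $\beta$ sitting against much larger pieces is exactly the situation the lemma must handle, the coarea/pigeonhole engine does not reach the crux, and no argument is offered for the claimed alternative. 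The paper does not sweep at all: it takes the Margulis decomposition of $\varphi _t(S)$ into gaps and loops $\omega $ with $|\varphi _t(\partial \omega )|\le \varepsilon _0$, groups adjacent pieces whose $a'$-areas differ by a factor at most $e^{\eta _3\Delta _1}$, and lets $\beta '$ be the connected union of such pieces containing $\beta $; then $a'(\beta ')\le e^{p\eta _3\Delta _1}a'(\beta )$ because there are at most $p$ pieces, and every component of $\partial \beta '$ separates a piece from one whose $a'$-area is at least $e^{\eta _3\Delta _1}$ times larger, which is the property the length estimate (and its propagation over $|u-t|\le \eta _3\Delta _1/4$, using the at-most-exponential variation of qd-length) actually rests on. Your $\partial N_{r_0}$ comes with no analogue of this area-ratio property, so even a short level curve would not give you the required bound on $|\varphi _u(\partial \beta ')|$.

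Two further steps fail as stated. In the loop case you conflate the maximal embedded (collar) annulus, whose modulus is bounded below in terms of $L$, with the maximal-modulus annulus with boundary in good position, whose $q_t$-area is $a'(\beta )$: the latter may be degenerate (even of zero area) while $|\varphi _t(\beta )|_t$ is of order $1$, so $|\varphi _t(\beta )|_t\le CL\,e^{-\Delta _1/2}$ does not follow from the hypotheses; and in any case $\beta '=\beta $ cannot work, because the conclusion requires the Poincar\'e length of $\partial \beta '$ to be $\le e^{-\eta _2\Delta _1}$ for all $|u-t|\le \eta _2\Delta _1$, whereas the hypothesis only gives $|\varphi _t(\beta )|\le L$ and bounded Teichm\"uller motion changes hyperbolic lengths by bounded factors --- the enlargement of $\beta $ is genuinely needed in the loop case too, as in the paper's construction. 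Second, your conversion of a $q_t$-length bound into a Poincar\'e bound via the converse direction of \ref{7.7} is not exponential: hyperbolic length is comparable to the reciprocal of the modulus of the largest annulus homotopic to the curve, and a gap of size $2\eta _2\Delta _1$ in the qd-length function yields, through collar/expanding annuli, a modulus only of order $\eta _2\Delta _1$ (the logarithm of the length ratio), hence a hyperbolic length bound of order $1/(\eta _2\Delta _1)$ rather than $e^{-2\eta _2\Delta _1}$. So even granting the existence of your $r_0$, the route proposed does not deliver the stated conclusion.
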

 \begin{proof} Choose $\varepsilon _0>0$ such that no loop of Poincar\'e length $\le \varepsilon _0$ is intersected transversely by a loop of length $\le L$. For any $\eta _3>0$ sufficiently small depending only on the topological type of $S$ we can find a connected union $\beta '$ of gaps and loops $\omega $ containing $\beta $ such that $|\varphi _t(\partial \omega )|\le \varepsilon _0$, and if $\omega '$ is adjacent to $\omega $ and also in this union then  $e^{-\eta _3\Delta _1}\le a'(\omega ')/a'(\omega )\le e^{\eta _3\Delta _1}$, but if $\omega $ is inside the union and $\omega '$ outside then $a'(\omega ')>e^{\eta _3\Delta _1}a'(\omega )$. It follows that for any such $\omega $ and $\omega '$ we have
 $$|\varphi _t(\partial \omega '\cap \partial \omega )|\le e^{-\eta _3\Delta _1/2}{\rm{Max}}(a'(\omega ),a'(\omega ')).$$
 So then for $|u-t|\le \eta _3\Delta _1/4$ we have
  $$|\varphi _u(\partial \omega '\cap \partial \omega )|\le e^{-\eta _3\Delta _1/4}{\rm{Max}}(a'(\omega ),a'(\omega ')).$$
  Now there are at most $p$ sets $\omega $ in the union, where $p$ depends only on the topological type of $S$ and hence
  $$a'(\beta ')\le e^{p\eta _3\Delta _1}a'(\beta )\le e^{-(1-p\eta _3)\Delta _1}.$$
  So if we choose $\eta _3$ with $p\eta _3\le \eta _1$ we have $a'(\beta ')\le e^{-(1-\eta _1)\Delta _1}$ and the bound on $|\varphi _u(\partial \beta ')|$ holds for $\eta _2=\eta _3/4$.\end{proof}

\begin{lemma}\label{7.11}   Let $|\varphi _{u_1}(\partial  \beta)|\le L$ and $|\varphi _{u_1}(\partial \beta )|_{u_1,-}\ge \frac{1}{2}|\varphi _{u_1}(\partial \beta )|_{u_1}$,
and let $a'(\beta )\le e^{-\Delta _1}$. Then if $\Delta _1$ is sufficiently large given constants $\mu _i$, $L$ and the topological type of $S$,  there is an increasing sequence of subsurfaces $(\beta _i:1\le i\le k)$ with $\beta =\beta _1$,  sequences of surfaces $(\omega _i=\omega _i(\beta ):1\le i\le k)$ and $(\omega _i'=\omega _i'(\beta ):1\le i\le k-1)$ with $\omega _1=\beta $,   $\omega _k=\emptyset $, a decreasing sequence of real numbers $u_i=u_i(\beta )$, and constants $L_i$ depending only on $L$ and $\mu _j$ for $j<i$ and the topological type of $S$, such that the following hold.
\begin{itemize}
\item  $\beta _i\setminus \beta _{i-1}\subset \omega _i\subset \beta _i$  and $\omega _i'\subset \omega _i\cap \omega _{i+1}$, so that $\beta _k=\cup _{i=1}^{k-1}\omega _i\setminus \omega _{i+1}$. 
\item $|\varphi _{t}(\partial \omega _i)|\le L_i$ for $u_i\le t\le u_{i-1}$, for $2\le i\le k$, 
 $|\varphi _{u_i}(\gamma )|_{u_i,-}\ge \frac{1}{2}|\varphi _{u_i}(\gamma )|_{u_i}$ for some component $\gamma $ of $\partial \beta _i$ with $|\varphi _{u_i}(\gamma )|\ge 1$.
\item  For all $t\le u_i$, every  segment $\zeta $ of unstable foliation across $\varphi _{t}(\omega _{i}\setminus \omega _i')$ has Poincar\'e length $\le L_i$ for all $t\le u_i$, and every maximal segment  $\zeta $ of unstable foliation across $\varphi _t(\omega_i\setminus \omega _{i+1})$ is adjacent on each side to a segment $\zeta '$ of unstable foliation in $\varphi _t(S\setminus \omega  _{i})$ with $|\zeta |_t\le \mu _i|\zeta '|_t$.
\item $a'(\beta _{i+1})\le CL_{i}\mu _{i}^{-1}a'(\beta _{i})$, where $C$ depends only on the topological type of $S$. 
\item Either $\beta _{i+1}$ is strictly bigger than $\beta _i$ or $\omega _{i+1}$ is strictly smaller than $\omega _i$.
\item Either $u_k=0$ or  every unstable segment across $\varphi _{u_k}(\beta _k)$ has Poincar\'e length $\le L_k$ and every unstable segment $\zeta $ across $\varphi _{u_k}(\beta _k)$ is adjacent to a segment $\zeta '$ of $\varphi _{u_k}(S\setminus \beta _k)$ with $|\zeta |\le \mu _k|\zeta '|$. 
\end{itemize}
\end{lemma}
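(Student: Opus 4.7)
The plan is to build the sequence inductively, stepping backward in time from $u_1$, with Lemmas \ref{7.19} and \ref{7.22} as the main engines. Set $\beta_1=\omega_1=\beta$ and take $u_1$ from the hypothesis. At stage $i$, suppose $(\beta_i,\omega_i,u_i)$ has been constructed with some component $\gamma\subset\partial\beta_i$ satisfying $|\varphi_{u_i}(\gamma)|_{u_i,-}\ge \tfrac12|\varphi_{u_i}(\gamma)|_{u_i}$ and $|\varphi_{u_i}(\gamma)|\ge 1$. Since stable qd-length grows as $t$ decreases, the qd-length function $F(t,\gamma)$ (see \eqref{7.20.1}) grows essentially like $e^{u_i-t}$ for $t<u_i$, so $\gamma$ cannot remain short in the Poincar\'e metric all the way back; we must absorb it into a larger subsurface. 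First apply Lemma \ref{7.19} at time $u_i$ to enlarge $\beta_i$ to a subsurface with area still bounded by a fixed power of $e^{-\Delta_1}$ and with boundary of Poincar\'e length $\le e^{-\eta_2\Delta_1}$ on a backward interval around $u_i$; this is the candidate $\omega_{i+1}$.

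Next, define $u_{i+1}\le u_i$ as the latest time at which the qd-length function of some multicurve containing $\partial\omega_{i+1}$ together with a new interior loop $\gamma'$ crosses the threshold forcing $|\varphi_{u_{i+1}}(\gamma')|_{u_{i+1},-}\ge\tfrac12|\varphi_{u_{i+1}}(\gamma')|_{u_{i+1}}$; if this never happens before $t=0$, set $u_{i+1}=0$ and stop. Put $\beta_{i+1}$ to be the smallest subsurface containing $\omega_{i+1}$ together with the region bounded by $\gamma'$. For $\omega_i'$, take the complement in $\omega_i$ of a small neighbourhood of the region through which the new short arcs/loops will later develop; this is the part whose geometry is already ``frozen'' at time $u_i$, so by Lemma \ref{7.22} every unstable segment across $\omega_i\setminus\omega_i'$ has Poincar\'e length $\le L_i$ for all $t\le u_i$. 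The ring $\omega_i\setminus\omega_{i+1}$ carries the actual transition: by construction its boundary components have bounded Poincar\'e length near the relevant times, so unstable segments across it are very short in qd, while adjacent unstable segments outside $\omega_i$ cross a genuinely thick region (picked up by some ltd or bounded subsurface from \ref{5.5}). Applying Corollary \ref{7.21} to the adjacent thick piece, or \ref{7.1} directly, produces the factor $\mu_i$ comparing $|\zeta|_t\le\mu_i|\zeta'|_t$.

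The area bound $a'(\beta_{i+1})\le CL_i\mu_i^{-1}a'(\beta_i)$ follows from the comparison in \ref{7.7} between $|\varphi_{u_{i+1}}(\partial\beta_{i+1})|$ and $|\varphi_{u_{i+1}}(\partial\beta_{i+1})|_{u_{i+1}}$, the fact that $a'(\beta_{i+1})$ is bounded by the square of the latter (by \ref{5.4} applied to $\beta_{i+1}$ as the ``$\omega$''), and the $\mu_i$-smallness of the unstable contribution just established. Since $\mu_i$ may be chosen so that $CL_i\mu_i^{-1}<1$, areas shrink geometrically. Termination in finitely many steps is forced by the alternative ``either $\beta_{i+1}\supsetneq\beta_i$ or $\omega_{i+1}\subsetneq\omega_i$'' together with the bound on topological complexity of $S$: either we run out of room (reach $u_k=0$) or the surface $\beta_k$ saturates, at which point the construction furnishes the terminal adjacency condition on unstable segments across $\varphi_{u_k}(\beta_k)$.

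The principal obstacle is the simultaneous compatibility of the four requirements: (i) $|\varphi_t(\partial\omega_i)|\le L_i$ on the whole interval $[u_i,u_{i-1}]$, (ii) the global Poincar\'e-length bound on unstable segments across $\omega_i\setminus\omega_i'$ for every $t\le u_i$ (not just in a neighbourhood), (iii) the adjacency comparison with factor $\mu_i$ for segments across $\omega_i\setminus\omega_{i+1}$, and (iv) the multiplicative area decay. In particular, identifying the correct time $u_{i+1}$ is delicate, because the enlargement produced by \ref{7.19} only controls $|\varphi_u(\partial\omega_{i+1})|$ on a short interval; extending this control backward requires that each newly-short loop inside $\omega_{i+1}$ be captured into $\beta_{i+1}$ precisely at the moment its stable qd-component takes over, so that the ``stable-dominance'' condition $|\varphi_{u_{i+1}}(\gamma)|_{u_{i+1},-}\ge \tfrac12|\varphi_{u_{i+1}}(\gamma)|_{u_{i+1}}$ is inherited for the next step. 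The interplay between the convexity of the qd-length function \eqref{7.20.1} and Lemma \ref{7.22}, which translates qd control into Poincar\'e control backward in time, is what ultimately makes the induction close.
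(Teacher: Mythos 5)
Your proposal does not follow the paper's construction, and the route you take has genuine gaps at the two places where the real work happens. The heart of the paper's proof is an absorption dichotomy driven by the Fundamental Lemma \ref{7.1}: at time $u_i$ one applies \ref{7.1} to find $\omega_i'\subset\omega_i$ (possibly empty) whose boundary has bounded length and stable part small relative to total qd-length, so that \emph{every} unstable segment across $\varphi_{u_i}(\omega_i\setminus\omega_i')$ has bounded Poincar\'e length, a bound that persists for all $t\le u_i$ by \ref{7.22}. One then looks at each such unstable segment $\zeta$ and its adjacent exterior unstable segments $\zeta'$: if $|\zeta|\le\mu_i|\zeta'|$ on both sides, that piece is finished and stays in $\omega_i\setminus\omega_{i+1}$; otherwise the comparatively short exterior segments are \emph{absorbed}, and their union with $\beta_i$ (completed by discs and annuli and put in good position) is $\beta_{i+1}$, while $\omega_{i+1}$ is $\omega_i'$ together with the newly active part. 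The $\mu_i$-adjacency in the third bullet therefore holds by construction, because the segments violating it were swallowed. In your proposal this mechanism is absent: you invoke Corollary \ref{7.21} or \ref{7.1} to "produce the factor $\mu_i$", but $\mu_i$ is an arbitrary given constant and no lemma produces it; and your $\omega_i'$ ("complement of a small neighbourhood of the region through which the new short arcs/loops will later develop") is not a construction at all --- it is precisely the object that \ref{7.1} is needed to supply. Your use of \ref{7.19} to define $\omega_{i+1}$ is also misplaced: \ref{7.19} gives an enlargement with small boundary only on an interval of length $\eta_2\Delta_1$ around one time, whereas $\omega_{i+1}$ must satisfy $\beta_{i+1}\setminus\beta_i\subset\omega_{i+1}\subset\beta_{i+1}$ and carry the bound $|\varphi_t(\partial\omega_{i+1})|\le L_{i+1}$ on all of $[u_{i+1},u_i]$, which can be far longer; in the paper that bound comes from the stopping-time definition of $u_{i+1}$ (the first $t\le u_i$ at which some component of $\partial\beta_{i+1}$ has stable part half its qd-length and Poincar\'e length $\ge 1$), not from \ref{7.19}, which is never used in this proof.

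The second gap is the area bound. The inequality $a'(\beta_{i+1})\le CL_i\mu_i^{-1}a'(\beta_i)$ cannot be extracted from \ref{7.7} together with the inequality $s_0a'(\cdot)\le|\varphi_t(\partial\cdot)|_t^2$ of \ref{5.4}/\ref{7.6}: that inequality requires disjointness from ltd's along a long segment and in any case points the wrong way for what you need. In the paper the bound is a genuinely geometric argument: the added region $\beta_{i+1}\setminus\beta_i$ is cut into trapezia and boundedly many larger polygons foliated by unstable leaves, the area of each trapezium is at most the product of two adjacent side lengths and hence at most $\mu_i^{-1}$ times the area of an adjacent trapezium inside $\varphi_{u_i}(\beta_i)$, and a separate counting argument bounds the number of polygons meeting each added annulus. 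Note also that your remark that "$\mu_i$ may be chosen so that $CL_i\mu_i^{-1}<1$, areas shrink geometrically" misreads the quantifiers: the $\mu_i$ are given (and in the application in \ref{7.18} they must be small), so $CL_i\mu_i^{-1}$ is a large growth factor; termination does not come from area decay but from the alternative in the fifth bullet together with the bound on topological complexity of $S$, with the area growth controlled only well enough (over boundedly many steps) to keep $\beta_k\ne S$ when $\Delta_1$ is large.
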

\noindent {\textbf{Remark}} Note that there is no claim that $|\varphi _{u_i}(\beta _{i-1})|$ is bounded.

\begin{proof} We start the inductive definitions with $\beta =\beta _1=\omega _1$. By the Fundamental Lemma \ref{7.1}, for a constant $L_1$ depending only on $L$ and the topological type of $S$, there is a subsurface $\omega _1'$ of $\beta $ --- which could be empty --- such that $|\varphi _{u_1}(\partial \omega _1')|\le L_1$ and 
$$|\varphi  _{u_1}(\partial \omega _1')|_{u_1,-}\le L^{-1}|\varphi _{u_1}(\partial \omega _1')|_{u_1}.$$
and every unstable segment across $\varphi _{u_1}(\beta \setminus \omega _1')$ has Poincar\'e length $\le L_1$. The same estimate for $u\le u_1$ follows by \ref{7.22}, for $L_1$ sufficiently large given $L$.

  Let $\beta _2'$ be the surface which is the union of $\beta =\beta _1$ and every maximal unstable segment $\zeta '$ outside $\beta $ which is adjacent to an unstable segment $\zeta $ across $\varphi _{u_1}(\beta \setminus \omega _1')$, such that $|\zeta '|_{u_1}<\mu ^{-1}|\zeta |_{u_1}$. Then $a'(\beta _2')\le (1+\mu ^{-1})a'(\beta )$. Now let $\beta _2$ be the surface containing $\beta _2'$ such that $(\beta _2\setminus \beta _2')\cup (\beta _2'\setminus \beta _2)$ is a union of discs and annuli, and such that $\varphi _{u_1}(\partial \beta _2)$ is in good position. Let $\varphi _{u_1}(\omega _2)\subset \varphi _{u_1}(\beta _2)$ be the union of $\varphi _{u_1}(\omega _1')$ and the surface obtained by leaving out those unstable segments $\zeta $ in $\varphi _{u_1}(\beta )$ for which an adjacent unstable segment $\zeta '\subset \varphi _{u_1}(S\setminus \beta )$ exists on each side, with $|\zeta |_{u_1}\le \mu |\zeta '|_{u_1}$, again homotoping so that $\varphi _{u_1}(\omega _2)$ is in good position. Thus $\varphi _{u_1}(\omega _2)$ is obtained from $\varphi _{u_1}(\beta _2)$ by leaving out some handles with boundary which was in $\varphi _{u_1}(\partial \beta )$, before the good position homotopy. Then $|\varphi _{u_1}(\omega _2)|\le \mu ^{-1}L_1$, assuming that $CL\le L_1$, for  a suitable constant $C$ depending only on the topological type of $S$. 
  
  Now we prove that $a'(\beta _2)\le C\mu ^{-1}a'(\beta )$, again assuming that $C$ is suitably chosen. The surface $\varphi _{u_1}(\beta _2\setminus \beta _2')$ is a union of topological discs and annuli. So the aim is to bound the areas of the added discs and annuli. Each topological disc  is a polygon with each side at  a constant slope to the stable and unstable foliation, with alternate sides tangent to the unstable foliation. The number of sides of each polygon is bounded by the number and type of singularities inside the polygon, which is bounded by the topological type of $S$. So the total number of polygons in $\varphi _{u_1}(\beta _2\setminus \beta _2')$ which have more than four sides is bounded by the topological type of $S$. 
  
  The area of any polygon with at most four sides -- a trapezium -- is bounded by the product of the length of two adjacent sides, that is, by $\mu ^{-1}$ times the product of the lengths of an adjacent triangle or trapezium in $\varphi _{u_1}(\beta )$. For any of the other boundedly finitely many polygons, we foliate by unstable segments, and so obtain the polygon as a finite union of trapezia, such that only unstable sides of trapezia can be in the interior of the polygon, and sides which are not unstable segments are in  $\varphi _{u_1}(\partial \beta )$. So then, by induction, we obtain a bound on the area of the polygon in terms of $\mu ^{-1}$ times the area of adjacent trapezia in $\varphi _{u_1}(\beta )$. So the area of the polygon is $\le C_1\mu ^{-1}a'(\beta )$ for a suitable constant $C_1$.  
  
  To obtain a similar area bound for the annuli in $\varphi _{u_1}(\beta _2\setminus \beta _2')$, it suffices to bound the number of polygons in an annulus $\varphi _{u_1}(A)$ bounded by $\varphi _{u_1}(\gamma )$ and $\varphi _{u_1}(\gamma ')$, where $\gamma $ and $\gamma '$ are homotopic components of $\partial \beta _2$ and $\partial \beta _2'$ respectively. Since $\varphi _{u_1}(\gamma )$ is in good position, we have a bound, in terms of the topological type of $S$, on the number of constant slope segments on $\varphi _{u_1}(\gamma )$. We need a bound on the number of constant slope segments on $\varphi _{u_1}(\gamma '$, which means bounding the number of segments of $\varphi _{u_1}(\partial \beta )$ and $\varphi _{u_1}(\partial \beta _2')$ on $\varphi _{u_1}(\gamma ')$. To do this, we consider the trapezia in $\varphi _{u_1}(\beta _2'\setminus \beta )$ which are subsets of the boundedly finitely many trapezia in $\varphi _{u_1}(S\setminus \beta )$. Both sets of trapezia are foliated by unstable segments and have their other sides in $\varphi _{u_1}(\partial \beta )$. No two trapezia of $\varphi _{u_1}(\beta _2'\setminus \beta )$ can be in the same trapezium of $\varphi _{u_1}(S\setminus \beta )$ and bounded by the same trapezia of $\varphi _{u_1}(\beta )$. So the number of trapezia in $\varphi _{u_1}(\beta _2'\setminus \beta )$ is bounded. So the number of boundary components of these trapezia is bounded, and hence the number that can intersect $\varphi _{u_1}(\gamma ')$ is bounded.

 If $\beta _2=\beta _1$  and $\omega _2=\omega _1'= \emptyset $, then we define $k=1$.  Now suppose that $\beta _2\ne \beta _1$ and $\omega _2\ne \emptyset $.  Then choose the first $u_2$ to be the first $t\le u_1$ such that $|\varphi _{t}(\gamma )|_{t,-}=\frac{1}{2}|\varphi _t(\gamma )|_t$ for a component $\gamma $ of $\partial \beta _2$ with $|\varphi _t(\gamma )|\ge 1$.  Then, from the bound  on $|\varphi _{u_1}(\partial \omega _1'\cup \partial \beta )|$, we have $|\varphi _t(\partial \omega _2)|\le L_2$ for $u_2\le t\le u_1$, for $L_2$  depending only  on $L$ and $\mu _1$ . By the Fundamental Lemma \ref{7.1}, enlarging $L_2$ if necessary, but still depending only on $L$ and $\mu _1$, there is $\omega _2'\subset \omega  _2$ (where $\omega _2'$ is allowed to be empty) such that $|\varphi _{u_2}(\partial \omega _2')|\le L_2$, and $|\zeta |\le L_2$ for every unstable segment $\zeta $ across $\varphi _{u_2}(\omega _2\setminus \omega _2')$  has Poincar\'e length $\le L_2$ and 
 $$|\varphi _{u_2}(\partial \omega _2')|_{u_2,-}\le L^{-1}|\varphi _{u_2}(\partial \omega _2)|_{u_2}$$
 We define $\varphi _{u_2}(\beta _3')$ to be the union of $\varphi _{u_2}(\beta _2)$ and of all unstable  segments $\zeta '$ in $\varphi _{u_2}(S\setminus \omega  _2)$ such that $\zeta '$ has both  endpoints in $\varphi _{u_2}(\partial \omega _2)$ and $\zeta '$ is adjacent to an unstable segment $\zeta $ in $\varphi _{u_2}(\omega _2\setminus \omega _2')$ with $|\zeta '|\le \mu _2^{-1}|\zeta |$. We therefore have $a'(\beta _3')\le \mu _2^{-1}a'(\beta _2)$. We then define $\beta _3$,  $\omega _3$  and $L_3$ from $\beta _3'$, $\omega _2'$, $L_2$ and $\mu _2$ in exactly the same way as $\beta _2$, $\omega _2$ and $L_2$ are defined from $\beta _2'$, $L$ and $\mu $, and continue to define $u_3$ and $\omega _3'$ analogously to $u_2$ and $\omega _2'$. The definition of $\beta _i$, $\omega _i$, $\omega _i'$, $L_i$ and $u_i$ is exactly the same for all $i\ge 3$. The bound on $a'(\beta _i)$ for $i\ge 3$ works in the same way as the bound on $a'(\beta _2)$.

Since any sequence of subsurfaces of $S$  of strictly increasing or strictly decreasing topological type is bounded --  in terms of the topological type of $S$, there is $k$ bounded in terms of the topological type of $S$ such that $\beta _k=\beta _{k-1}$ and $\omega _k=\emptyset $ with $\beta \subset \beta _k$ and $a'(\beta _k)\le \prod _{i\le k}L_i\mu _i^{-1}a'(\beta )$. If $\Delta _1$ is sufficiently large, it follows that $\beta _k\ne S$.\end{proof}

For the sets $\beta _j=\beta _j(\beta )$ and $\omega _j=\omega _j(\beta )$ as in \ref{7.11}, we have 
$$\beta _{i-1}\setminus \omega _i=\cup _{j=1}^{i-1}(\omega _{j+1}\setminus \omega _j).$$

\begin{lemma}\label{7.15} Fix a Teichmuller geodesic segment  $\ell _0=[y_{s_1},y_{s_0}]$ with $|\ell _0|\le   p_1\Delta _1$. The number of $\beta $ such that $|\varphi _t(\partial \beta )|\le L$ for $t$ in an interval of $[s_1,s_0]$ of length $\ge\Delta _1/p_1$ is bounded in terms of $L$,  $p_1$  and the topological type of $S$. Let $u_i(\beta )$ and $\omega _i(\beta )$ be as in \ref{7.11}. The number of $\omega _i=\omega _i(\beta )$ with $y_{u_i}\in \ell _0$ is bounded in terms of $L$,  $p_1$, $(\mu _j:j<i)$  and the topological type of $S$,  in any interval of length $\Delta _1$, for any $i\le k=k(\beta )$, even if $y_{u_1}\notin \ell _0$.\end{lemma}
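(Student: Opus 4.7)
My approach is to prove the two assertions separately, the second by induction on $i$. For the first assertion, cover $\ell _0$ by at most $2p_1^2+1$ subintervals of length $\Delta _1/(2p_1)$, say centred at $t_1,\ldots ,t_m$. Every $\beta $ in the count has an interval of length $\ge \Delta _1/p_1$ on which $|\varphi _t(\partial \beta )|\le L$, so that interval must contain some $t_j$. At each $\varphi _{t_j}(S)$ the classical bound (via the collar lemma) on the number of simple closed geodesics of Poincar\'e length at most $L$ gives a bound $N(L,S)$, and hence a bound on multicurves of total length $\le L$. Each such multicurve $\partial \beta $ determines $\beta $ up to bounded multiplicity. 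This produces a total bound in terms of $p_1$, $L$, and the topological type of $S$.

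For the second assertion I argue by induction on $i$, working with any length-$\Delta _1$ subinterval $\ell '$ of the geodesic (not necessarily inside $\ell _0$), since the ``even if $y_{u_1}\notin \ell _0$'' clause allows $\beta $'s whose earlier times lie outside $\ell _0$. Fix a constant $p_1'$. By Lemma \ref{7.11}, $|\varphi _t(\partial \omega _i(\beta ))|\le L_i$ on $[u_i,u_{i-1}]$. Split into two cases. If $u_{i-1}-u_i\ge \Delta _1/p_1'$, then the first assertion applied with parameters $L_i$ and $p_1'$ on a slightly enlarged ambient interval bounds the number of $\omega _i$ directly. Otherwise $u_{i-1}-u_i<\Delta _1/p_1'$, so $y_{u_{i-1}}$ lies in the $(\Delta _1/p_1')$-neighbourhood of $\ell '$. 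The inductive hypothesis then bounds the number of admissible $\omega _{i-1}$'s at level $i-1$, and the essentially deterministic recipe in Lemma \ref{7.11}---the Fundamental Lemma \ref{7.1}, the $\mu _{i-1}$-adjacency conditions on unstable segments, and the passage to good position---produces $\omega _i$ from $\beta _{i-1}$ and $u_{i-1}$ up to bounded multiplicity.

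The base case $i=1$ is slightly different because $|\varphi _{u_1}(\partial \beta )|\le L$ holds only at the single time $u_1$. Here the area hypothesis $a'(\beta )\le e^{-\Delta _1}$ of Lemma \ref{7.11} together with Lemma \ref{7.19} yields $\beta '\supset \beta $ with $|\varphi _u(\partial \beta ')|\le e^{-\eta _2\Delta _1}$ on an interval of length $2\eta _2\Delta _1$ about $u_1$; choosing $\eta _2$ so that $2\eta _2\ge 1/p_1'$, the first assertion bounds the number of such $\beta '$, and for fixed $\beta '$ only boundedly many $\beta \subset \beta '$ satisfy the hypotheses of Lemma \ref{7.11}, since the balance condition $|\partial \beta |_{u_1,-}\ge \frac{1}{2}|\partial \beta |_{u_1}$ together with the qd-V-shape (\ref{7.20.1}) forces $u_1$ to be the qd-minimum time of $\partial \beta $ up to $O(1)$. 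The principal obstacle is Case B of the inductive step: confirming that each stage of Lemma \ref{7.11} introduces only bounded multiplicity, and that the constants $L_i$ stay bounded inductively in $L$ and $(\mu _j)_{j<i}$.
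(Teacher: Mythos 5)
Your first assertion and your Case A are fine, and your base-case use of \ref{7.19} is in the right spirit. The genuine gap is exactly the step you flag as ``the principal obstacle'': in Case B you pass from a bound on the number of $\omega _{i-1}$'s to a bound on the number of $\omega _i$'s by appealing to the ``essentially deterministic recipe'' of \ref{7.11} applied to $(\beta _{i-1},u_{i-1})$. This does not work as stated, for two reasons. First, the inductive hypothesis bounds the number of subsurfaces $\omega _{i-1}$, not the number of $\beta _{i-1}$'s or of pairs $(\beta _{i-1},u_{i-1})$; unboundedly many $\beta $'s (with $u_1(\beta )$ arbitrarily far to the right of $\ell _0$) can feed into the same stretch of geodesic, and indeed the whole point of the lemma is that only the $\omega _i$'s, not the $\beta $'s, are countable. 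Since the construction of $\omega _i$ in \ref{7.11} takes place at time $u_{i-1}$ and involves data ($\omega _{i-1}'$ from the Fundamental Lemma \ref{7.1}, unstable segments outside the current surface, good-position homotopies) that is neither canonical nor determined by $\omega _{i-1}$ alone, ``bounded multiplicity of the recipe'' is not available from what you have bounded. Second, even granting $\omega _{i-1}$, you never pin down $u_{i-1}$: in Case B you only know $y_{u_{i-1}}$ lies in a $(\Delta _1/p_1')$-neighbourhood of $\ell '$, an interval of length comparable to $\Delta _1$, and any count obtained by sampling such an interval at bounded spacing grows with $\Delta _1$, which is not permitted (the bound must depend only on $L$, $p_1$, the $\mu _j$ and the topology). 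You invoke the balance condition to locate a time only in the base case, for $u_1$, where it does not actually help (the qd-minimum time of $\partial \beta $ depends on the very $\beta $ you are counting, so ``boundedly many $\beta \subset \beta '$'' is also unjustified as written).

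The paper's mechanism repairs precisely these two points, and your induction would be correct if you replaced the ``recipe'' step by it: from a given $\omega _{j}$, the time $u_{j}$ is determined to within a bounded distance by the balance property $|\varphi _{u_j}(\gamma )|_{u_j,-}\ge \frac{1}{2}|\varphi _{u_j}(\gamma )|_{u_j}$ for a component $\gamma $ of the boundary with $|\varphi _{u_j}(\gamma )|\ge 1$, together with the V-shape (\ref{7.20.1}); then \ref{7.11} gives $|\varphi _{u_j}(\partial \omega _{j+1})|\le L_{j+1}$ at that (essentially known) time, so the multicurve count at a single point of $\cal{T}(S)$ bounds the number of possible $\omega _{j+1}$ per $\omega _j$, with constants depending only on $L$, the $\mu _{j'}$ for $j'<j$ and the topology. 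The paper seeds this chain at the endpoint $s_0$ of $\ell _0$ (choosing the first $j$ with $s_0\in [u_j,u_{j-1}]$, so that $\partial \omega _j$ is bounded at $s_0$ itself) rather than running an induction on $i$, which also disposes of your separate base-case difficulty: the $\beta $'s to which the lemma is applied have boundary bounded on an interval of length $\ge \Delta _1/p_1$, so when $y_{u_1}\in \ell _0$ the count of $\beta $'s is just the first assertion, and no argument via \ref{7.19} and subsurfaces of $\beta '$ is needed.
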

\begin{proof} The loop set $\partial \beta $ is a {\em{multicurve}} (see \ref{2.0}), that is, a set of simple closed loops which are homotopically disjoint and distinct. For any fixed $t$, the number of multicurves  in $\varphi _t(S)$ of length $\le L$ is $\le C_1L^{6g-6+2b}$ where $g$ is the genus of $S$ and $b$ the number of boundary components and $C_1$ is a universal constant -- just depending on the Margulis constant in two dimensions. So by considering $p_1^2+1$ points in $\ell _0$ such that any other point of $\ell _0$ is distance $\le \Delta _1$ from one of these, we see that the number of choices for $\beta $ is $\le L^{(p_1^2+1)(6g-6+2b)}$ if $y_{t_1}\in \ell $. Now suppose that $y_{u_i}\in \ell _0$. If $y_{u_1}\notin \ell _0$ then $s_0\in [u_{j},u_{j-1}]$ for some $j\le i$. Since $|\varphi _t(\partial \omega _j)|\le L_j$ for $t\in [u_{j},u_{j-1}]$, this is true for $t=s_0$, and hence the number of choices for this  $\omega _j$ is  $\le C_1L_j^{6g-6+2b}$, where $L_j$ depends only on $L$ and $\mu _{j'}$ for $j'<j$. Then $u_j$ is determined from $\omega _j$ to within a bounded distance by the property $|\varphi _{u_j}(\gamma )|_{u_j,-}\ge \frac{1}{2}\vert \varphi _{u_j}(\gamma )|_{u_j}$ for a component $\gamma $ of $\partial \omega _j$ with $|\varphi _{u_j}(\gamma )|\ge 1$. We also have $|\varphi _{u_j}(\partial \omega _{j+1})|\le L_j$. So the number of choices for $\omega _{j+1}$, given $u_j$, is also $\le C_1L_j^{6g-6+2b}$. Then from  $\omega _{j+1}$ and the predetermined $L_{j+1}$ we can determine $u_{j+1}$, and hence we have a bound on the number of choices for $\omega _{i'}$ for all $i'\le i$ which depends only on $p_1$, $\mu _{i'}$ for $i'<i$ and the topological type of $S$.

\end{proof}

We now have the estimates in place to bound the intersection of unstable segments with ``bad set'', that is, the set of $\beta $ bounded by $L$. But there is still some work to do on the complement, the ``good set'' because this is the convex hull of long thick and dominants, rather than their union. So we need the following.

\begin{lemma}\label{7.17} Fix ltd parameter functions $(\Delta ,r,s,K_0)$ and $\nu _0$ as in \ref{5.4}.There is a constant $M$ depending on these such that the following holds. Let $\ell =[y_{u_1},y_{u_0}]\subset [y_0,y_t]$ be any Teichm\"uller geodesic segment such that there is at least one ltd $(\alpha ,\ell ')$ with $\ell'\subset \ell $. Let $\Omega $ be the convex hull of all such $\alpha $ and let $\Omega '$ be the union of all such $\alpha $. Then   any unstable segment $\zeta $ of $\varphi _t(\Omega \setminus \Omega ')$ is adjacent to a unstable segment $\zeta '$ in $\varphi _t(\alpha )$ for some ltd $(\alpha ,\ell ')$ with $\ell '\subset \ell $   with $|\zeta '|_t\ge |\zeta |_t/M$ for all $t$. A similar statement holds for stable segments.\end{lemma}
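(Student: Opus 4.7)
The plan is to use the structure of the convex hull together with Corollary~\ref{7.21}. By~\ref{2.0}, each component of $\Omega\setminus\Omega'$ is a topological disc with at most one puncture; let $D$ be the component containing $\zeta$. The boundary $\partial D$ is a cyclic concatenation of arcs, each lying on $\partial\alpha_i$ for some ltd $(\alpha_i,\ell_i')$ with $\ell_i'\subset\ell$, and the number of distinct adjacent ltds is bounded by the topological type of $S$.

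Next, enlarge $\zeta$ to the maximal unstable rectangle $R\subset D$ containing it: $R$ is foliated by parallel unstable leaves of length $|\zeta|_t$, with two stable transversal sides on $\partial D$, each lying on $\partial\alpha_i$ for some adjacent ltd. Continuing the foliation across such a transversal into $\alpha_i$ produces a rectangle $R'\subset\alpha_i$ of the same stable width, whose unstable leaves extend until they terminate at either $\partial\alpha_i$ or a singularity of $q_t$ in $\alpha_i$. The candidate $\zeta'$ is a leaf in $R'$ whose two endpoints both lie on $\partial\alpha_i$; by Corollary~\ref{7.21}, $|\zeta'|_t\ge 2L(\nu,\delta)\sqrt{a(\alpha_i)}$. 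Such a $\zeta'$ exists because the leaves in $R'$ terminating at singularities correspond to finitely many stable-coordinates, bounded in number by the count of zeros of $q_t$ in $\alpha_i$, itself controlled by the topological type of $S$. The leaf $\zeta'$ is adjacent to a parallel of $\zeta$ in $R$ across their common transversal on $\partial\alpha_i$, in the sense used in~\ref{7.11}.

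The main technical step is the matching upper bound $|\zeta|_t\le C'\sqrt{a(\alpha_i)}$ for some adjacent ltd. Since $\zeta$ is a geodesic of the flat metric $|q_t|\,dz^2$ contained in the topological disc $D$, which has geodesic boundary and at most one cone-angle-$\pi$ corner at the puncture, its qd-length is bounded by a constant times the qd-perimeter of $D$, hence by $\sum_i|\partial\alpha_i|_{q_t}$ over adjacent ltds. The ltd thickness property $a(\partial\alpha_i)\le s(\nu)\,a(\alpha_i)$ gives $|\partial\alpha_i|_{q_t}\le C\sqrt{a(\alpha_i)}$, and combined with the bound on the number of adjacent ltds this yields the desired upper bound with $\alpha_i$ realising the maximum of $a$ among ltds adjacent to $D$.

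Taking $M=C'/(2L(\nu,\delta))$ then gives $|\zeta'|_t\ge|\zeta|_t/M$ as required. The flat-loop case is handled analogously using the flatness constant $K_0$ in place of Corollary~\ref{7.21} — a flat ltd annulus provides long unstable extensions through itself — and the stable statement follows by the symmetric argument with stable and unstable interchanged.
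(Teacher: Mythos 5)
There is a genuine gap, and it is exactly the difficulty this lemma exists to handle: time-dependence. Your argument is carried out at a single time $t$, and its ``main technical step'' --- bounding the qd-perimeter of the complementary component $D$ by $\sum _j\vert \varphi _t(\partial \alpha _j)\vert _t\le C\sum _j\sqrt{a(\alpha _j)}$ --- invokes the dominance bound $a(\partial \alpha _j)\le s(\nu )a(\alpha _j)$ for \emph{every} ltd adjacent to $D$ at that one time. But each $\alpha _j$ is ltd only along its own subsegment $\ell _j'\subset \ell $, and these subsegments are spread out along $\ell $; away from $\ell _j'$ the quantity $\vert \varphi _t(\partial \alpha _j)\vert _t$ grows exponentially in $\vert t-t(\partial \alpha _j)\vert $ by (\ref{7.20.1}), so in general there is \emph{no} time at which all the boundary arcs making up $\partial D$ are simultaneously short relative to the areas. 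For the same reason Corollary \ref{7.21} gives your lower bound $\vert \zeta '\vert _t\ge 2L(\nu ,\delta )\sqrt{a(\alpha _i)}$ only for $t$ in (the appropriate part of) $\ell _i'$. Since the ratio of the lengths of two unstable segments is independent of $t$, it would suffice to verify the inequality at one well-chosen time; but you never exhibit a time at which both your upper and lower estimates hold, and the static picture does not supply one. There is also a quantitative mismatch even at a fixed good time: your lower bound is in terms of the particular ltd $\alpha _i$ met by the transversal sides of the rectangle through $\zeta $, while your perimeter bound is in terms of the maximal-area adjacent ltd; nothing forces these to be comparable, so $\vert \zeta \vert _t\le C'\sqrt{a(\alpha _{\max })}$ does not yield $\vert \zeta \vert _t\le M\vert \zeta '\vert _t$. (A smaller point: components of $\Omega \setminus \Omega '$ need not all be once-punctured discs --- the paper's construction also produces annuli --- and for an annulus the ``chord bounded by perimeter'' step fails without further input.)

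The paper's proof is organised precisely around these issues. It orders the ltds by the time $w_i$ at which they first become ``visible'', builds the partial convex hulls $\Omega _i$, and uses \ref{5.5} (bounded movement in the complement of ltds) to keep $\vert \varphi _u(\partial \Omega _i)\vert $ bounded in the Poincar\'e metric until the next ltd appears; at the time $w_i$, crossing components of $\partial \Omega _{i-1}$ and $\partial \alpha _i$ then have boundedly proportional lengths, the inequalities (\ref{7.17.1}) and (\ref{7.17.2}) are checked there, and they propagate to all later times because unstable lengths scale exactly by $e^{u-w_i}$ while the right-hand sides scale by at most that factor. Nothing in your proposal plays the role of this induction, of Lemma \ref{5.5}, or of the scaling argument that delivers the ``for all $t$'' in the statement; to repair the proof you would need some substitute for all three.
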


\begin{proof}  We treat the case  of unstable segments. The proof for  stable segments is exactly analogous. There are finitely may ltd's $(\alpha _i,[y_{w_i},y_{v_i}])$ with $1\le i\le k$ and $w_{i}\le w_{i+1}$ and $u_1\le w_i$ for all $i$, such that $\Omega _{i}$ is of larger topological type than $\Omega _{i-1}$, where $\Omega _i$ is the convex hull of $\cup _{j\le i}\alpha _j$ and $\Omega _0=\emptyset $, and there is no ltd $(\alpha ,\ell )$ such that $\alpha $ has essential intersection with $\Omega _{i}\setminus \Omega _{i-1}$ and $\ell \subset [y_{u_1},y_{w_i}]$.  We say that $\alpha _i$ is {\em{visible  from $y_{u_1}$}}, meaning that parts of it are. The visibility property means that, by \ref{5.5}, there is a constant $L_1$ depending only on $(\Delta ,r,s,K_0,\nu _0)$ and the topological type of $S$  such that  $|\varphi _{u}(\partial \Omega _{i})|\le L_1$ for $u_1\le u\le {\rm{\max}}(v_i,w_{i+1})$,  for each $i$.

We then aim to show inductively that, for a constant $c>0$ depending only on $(\Delta ,r,s,K_0,\nu _0)$ and the topological type of $S$, and any unstable segment $\zeta $ across $\varphi _u(\Omega _i)$,
\begin{equation}\label{7.17.1}|\zeta |_u\ge c|\zeta \cap (\cup _{j\le i}\varphi _u(\alpha _j))|_u\end{equation}
 for all $u$, and all $i$, and, if $\gamma $ is a component of $\partial \Omega _i$ such that $\varphi _u(\gamma )$ contains  an endpoint of $\zeta $, then, if $u\ge w_i$,
 \begin{equation}\label{7.17.2}|\varphi _u(\gamma )|_u\ge c|\zeta |_u.\end{equation}
These statements suffice, because $\gamma $ is a union of boundedly finitely many segments in the sets $\alpha _j$ for $j\le i$, where, for each $j$,  each segment of $\gamma \cap \alpha _j$ is disjoint from all ltd's $(\alpha ',\ell ')$ with $\ell '\subset [y_{u_1},y_{w_j}]$. In both cases, if the inequalities hold for $u=w_i$, they hold for all claimed $u$.  For (\ref{7.17.1}), this is because for different $u$, the two sides of the inequality are scaled by the same factor. For (\ref{7.17.2}), the lefthand side of the inequality is obtained from that for $u=w_i$ by multiplying by $e^{u-w_i}$ and the right-hand side is obtained by mutliplying by at most $e^{u-w_i}$. The statements are trivially true for $\Omega _1=\alpha _1$, by considering $u=w_1$. So we consider the inductive statements. We assume they are true for $\Omega _{i-1}$ with $i\ge 2$. Now  $\varphi _{u}(\Omega _i)$ is obtained from $\varphi _{u}(\Omega _{i-1}\cup \alpha _i)$ by adding annuli and topological discs, each of which is bounded by transversally intersecting components of $\varphi _u(\partial \Omega _{i-1})$ and $\varphi _u(\partial \alpha _i)$. For $u=w_i$, if $\gamma $ and $\gamma '$ are transversally intersecting components of $\partial \Omega _{i-1}$ and $\partial \alpha _i$, then  $|\varphi _u(\gamma )|$ and $|\varphi _u(\gamma ')|$ (that is, the Poincar\'e lengths) are boundedly proportional, with bound depending on the ltd parameter functions.  Therefore $|\varphi _{w_i}(\gamma )|_{w_i}$ and $|\varphi _{w_i}(\gamma ')|_{w_i}$ are also boundedly proportional.  So (\ref{7.17.2}) is true by induction --- possibly after modfifying $c$, but since this only has to be done for each $i$ and the number of $i$ is bounded in terms of the topological type of $S$, this is allowed. Unstable segments across any added topological disc or annulus have $qd$-length  bounded by a constant times $|\varphi _{w_i}(\gamma )|_{w_i}$ for any component $\gamma $ of $\partial \Omega _{i-1}$ or $\partial \alpha _i$ such that $\varphi _{w_i}(\gamma )$ intersects the boundary of this disc or annulus. This is less than $C_1|\zeta |_{w_i}$ for any  adjacent  unstable segment  $\zeta $ in $\varphi _{w_i}(\Omega _{i-1})$, by (\ref{7.17.2}) for $i-1$ replacing $i$,  and for  any unstable segment $\zeta $ in $\varphi _{w_i}(\alpha _i)$, since $(\alpha _i[y_{w_i},y_{v_i}])$ is ltd.  So we obtain
 $$|\zeta |_{w_i}\ge c_1|\zeta \cap \varphi _{w_i}(\Omega _{i-1}\cup \alpha _i)|_{w_i},$$
 for a suitable $C_1>0$, and then (\ref{7.17.1}) also follows, by induction.

\end{proof} 

\subsection{Proof of \ref{7.13}: construction of the 
sequences.}\label{7.18}
For $1\leq i\leq R_{0}$, some $R_{0}$, we shall 
find   sequences  $t_{i}$,  $\alpha _{i}$, $\zeta _{i}$, such that the 
following hold. We start by choosing $t_1$ with $0\le t_1\le \Delta _0/2$. But after that, to simplify the writing, we assume that $t_1=0$. In the most technical property, 5, the constant $\mu $ is as in \ref{7.11}, and $p$ is any integer such that $1/p\le \eta _2/3$, for $\eta _2$ as in \ref{7.19}. For any suitable $\mu $ and $p$, property 5 will be obtained if $\Delta _0$ is large enough. 
\begin{enumerate}
\item[1.] $(\alpha _{i},\ell _i)$ is ltd at $y_{t_{i}}\in \ell _i\subset  [y_{0},y_{T}]$ with respect to $(\Delta ,r,s,K_0)$, in the first quarter of $\ell _i$ if $\alpha _i$ is a loop, and $a'(\alpha _i)\ge e^{-\Delta _0/2}$.
\item[2.] $\zeta _{i+1}\subset  \zeta _{i}$ and $\zeta _i\subset \varphi _{0}(\alpha _{i})$ if $\alpha _i$ is a gap, and $\zeta _i\subset \varphi _0(A(\alpha _i))$ if $\alpha _i$ is a loop, where $\varphi _t(A(\alpha _i))$ is the flat annulus in the $q_t$-metric which is homotopic to $\varphi _t(\alpha )$.
\item[3.] Writing $t_1=0$, for all $i\geq 1$, 
$\chi _{t_{i}}(\zeta _{i})$ is a
segment of the unstable foliation of the quadratic differential $q_{t_i}(z)dz^2$ whose Poincar\'e length is boundedly proportional to the injectivity radius at that point of 
$\varphi _{t_i}(S)$. 
\item[4.] $t_{1}\leq \Delta _{0}/2$ and $t_{R_{0}}\geq T-\Delta _{0}$. For all $1\le i<R_{0}$, $t_{i}<t_{i+1}\leq 
t_{i}+\Delta _{0}$.
\item[5.]  Each segment $\varphi _{t_i}(\gamma )$ of  $\chi _{t_{i}}(\zeta _{i})\cap \varphi _{t_i}(\omega_j(\beta )\setminus \omega _{j+1}(\beta ) )$ is adjacent to a segment of  $\chi _{t_{i}}(\zeta _{i})\setminus  (\varphi _{t_i}(\omega_j(\beta )\setminus \omega _{j+1}(\beta )))$ whose (Poincar\'e or qd) length  is at least $\mu _j^{-1/2}$ times more, for any $\beta $  with $a'(\beta )\le e^{-\Delta _0/3}$ and $|\varphi _t(\partial \beta )|\le L$ for  $|t-u_0|\le \Delta _0/p$  and $u_0=u_0(\beta )\ge t_i+\Delta _0$. If $|\varphi _{t_i}(\gamma )|_{t_i}\ge \mu ^{-1}|\chi _{t_i}(\zeta _i)|_{t_i}$ for such a $\gamma $, then $\varphi _{t}(\gamma )\cap \chi _{t}(\zeta _{i+1})=\emptyset $ for all $t$.\end{enumerate}

1,2 and 4 give the proof of \ref{7.13}, apart from $a'(\alpha _i)>\delta _0$,  since, by 2, we have 
$$\zeta _{j}\subset  \varphi _{t_1}(\alpha _{i}) $$ 
for all $i\leq j$. 3 and 5 are needed for the inductive process. The notation of Property 5 comes from \ref{7.11}. 

Note that property 5 implies that $a'(\alpha _i)>e^{-5\Delta _0/14}$, which gives $a'(\alpha _i)>\delta _0$, if $\delta _0=e^{-5\Delta _0/14}$. For if $a'(\alpha _i)\le e^{-5\Delta _0/14}$, then by \ref{7.19} there is $\beta \supset \alpha $ with $|\varphi_t(\partial \beta )|\le e^{-\Delta _0/p}$ for $|t-t_i|\le \Delta _0/p$ and $a'(\beta )<e^{-\Delta _0/3}$,  for a suitably chosen $p$ depending only on the topological type of $S$. But then by Property 5 for this $\beta $, and the properties of unstable segment across $\varphi _t(\omega _j\setminus \omega _{j+1})$ of \ref{7.11},  $\zeta _i$ is not contained in $\cup _j\varphi _0(\omega _j(\beta )\setminus \omega _{j+1}(\beta ))$, assuming the $\mu _j$ grow sufficiently fast. But since $\beta \subset \cup _j\omega _j(\beta )\setminus \omega _{j+1}(\beta )$, this gives the contradiction that $\alpha $ is not contained in $\beta $. So we do have $a'(\alpha _i)>e^{-5\Delta _0/14}$. Then since $\chi _{t_i}(\zeta _i)$ has Poincar\'e length bounded from $0$ if $\alpha _i$ is a gap and boundedly proportional to the injectivity radius if $\alpha _i$ is a loop, we obtain $|\chi _{t_i}(\zeta _i)|_{t_i}\ge e^{-5\Delta _0/29}$ in both cases, provided that, when $\alpha _i$ is a loop,  we choose $t_i$  such that $y_{t_i}$ is towards the right end of the segment $\ell _i$ along which $(\alpha _i,\ell _i)$ is ltd, which we can do by inserting extra points $t_j$ with $j<i$  $y_{t_j}\in \ell _i=\ell _j$ and $\alpha _j=\alpha _i$, if necessary. Now if $t$ is chosen so that $t_i+\Delta _0/2\le t\le t_i+\Delta _0$, and $\zeta \subset \zeta _i$ is any segment such that $|\chi _t(\zeta )|$ is bounded, then $|\chi _t(\zeta )|_t$ is also bounded, and $|\chi _{t_i}(\zeta )|_{t_i}\le e^{-\Delta _0/2}|\chi _t(\zeta )|_t$. So 
\begin{equation}\label{7.13.1}|\chi _{t_i}(\zeta )|_{t_i}/|\chi _{t_i}(\zeta _i)|_{t_i}\le e^{-\Delta _0/4}.\end{equation}
In particular, (\ref{7.13.1}) holds if $t_i+\Delta _0/2\le t\le t_i+\Delta _0$ and $\chi _t(\zeta )$ is an unstable segment of bounded length in $\varphi _t(\alpha )$, where $(\alpha ,\ell )$ is ltd with $[\varphi _t]\in \ell $. (\ref{7.13.1}) also holds if $t=t_i+\Delta _0/2$, and $\chi _t(\zeta )$ is an unstable segment across $\varphi _t(\Omega \setminus \Omega ')$, where $\Omega '$ is the union of all $\alpha $ such that $(\alpha ,\ell )$ is ltd for some $\ell \subset [y_{t_i+\Delta _0/2}y_{t_i+\Delta _0}]$, and $\Omega $ is the convex hull of $\Omega '$. This is because, if $\sigma $ is any component of $\Omega \setminus \Omega '$ --- and hence $\sigma $ is a disc or annulus -- then $|\varphi _t(\partial \sigma )|$ is bounded.   (\ref{7.13.1}) is also true if $\chi _t(\zeta )$ is the union of a (possibly empty) segment across $\varphi _t(\Omega \setminus \Omega ')$ and an adjacent segment $\chi _t(\zeta ')$in $\varphi _t(\alpha )$ for an $\alpha $ such that $(\alpha ,\ell )$ is ltd with $\ell \subset [y_{t_i+\Delta _0/2},y_{t_i+\Delta _0}]$,   with $\chi _t(\zeta ')|_t\ge |\chi _t(\zeta )_t/M$, and such that $\chi_t(\zeta ')$ has Poincar\'e length at least  multiple bounded form $0$ injectivity radius at any point of $\chi _t(\zeta ')$, for any $[\varphi _t]\in \ell $.

 In what follows, we write $\chi _t(\zeta _i)\cap \varphi _t(\Omega )$ as 
 $$\cup _{\zeta \in A}\chi _{t}(\zeta )$$
  for one, and hence any $t$, for a set $A$ of such segments, that is, including a segment of some $\varphi _0(\alpha )$, where any two distinct segments in $A$ have disjoint interiors.

The argument for finding $(\alpha _1,\ell _1)$ is different from the argument for $(\alpha _i,\ell _i)$ for $i>1$. We take our fixed ltd parameters $(\Delta ,r,s,K_0)$. Let $\nu _0$ be as given by \ref{5.4} and $L$ be given by \ref{5.5} for $(\Delta ,r,s,K_0)$ and $\nu _0$. Let $L_0$ be such that $L_j\le L_0$ for all $L_j$ arising as in \ref{7.11}. In what follows, we are going to apply \ref{7.22} to \ref{7.15} with $\Delta _1=\Delta _0/3$. By \ref{5.4}, we can choose $(\alpha _1,\ell _1)$ which is ltd with respect to $(\Delta ,r,s,K_0)$, with $\ell _1\subset [y_0,y_{\Delta _0}]$, such that $a'(\alpha _1)>c_0$, where $c_0$ depends only on the topological type of $S$. Now if $\alpha \cap \omega _j(\beta )\ne \emptyset $ for some $\beta $ as in Property 5 and $u_j(\beta )\le t_1+\Delta _0/p\le u_{j-1}(\beta )$ then $\alpha \cap \beta '\ne \emptyset $ for some $\beta '\supset \omega _j(\beta )$ with $a'(\beta ')\le e^{-\Delta _0/4}$ and $|\varphi _t(\partial \beta ')|\le e^{-\Delta _0/p}$ for $|t-t_1|\le \Delta _0/p$. Then $\partial \beta '\cap \alpha \ne \emptyset $ if $\nu _0>e^{-\Delta _)/p}$, as we can assume by taking $\Delta _0$ sufficiently large, and $\alpha \subset \beta '$, which is impossible. On the other hand if $u_j(\beta )\ge t_1+\Delta _0/p$ then we see that for any segment $\varphi _{t_1}(\gamma )$ across $\varphi _{t_1}(\omega _j(\beta )\setminus \omega _{j+1}(\beta ))$ we have $|\varphi _{t_1}(\gamma )|_{t_1}\le e^{-\Delta _0/2p}|\zeta _1|$. Hence every segment $\varphi _{t_1}(\gamma )$ is adjacent to a segment $\zeta '\subset \zeta _1\setminus \varphi _{t_1}(\omega _j(\beta )\setminus \omega _{j+1}(\beta ))$ on at least one side with $|\varphi _{t_1}(\gamma )\le \mu _j|\zeta '|_{t_1}$, and hence Property 5 holds for $\zeta _1$, provided that $e^{-\Delta _0/2p}<\mu _j/3$, which is true provided that $\Delta _0$ is large enough given $\mu _j$ (for all $j$) and $p$.  

  Now given $\alpha _i$ and $\zeta _i$, we need to find $\alpha _{i+1}$ and $t_{i+1}$ and $\zeta _{i+1}$. We use the inductively obtained properties of $\zeta _i$, and the bounds this gives on segments $\zeta \subset \zeta _i$ for which $|\chi _t(\zeta )|$ is bounded for some $t>t_i$, as described above. We let $B_j$ be the set of all segments $\gamma $ of  $\omega _j(\beta )\setminus \omega _{j+1}(\beta )$, for all $\beta $,  such that $a'(\beta )\le e^{-\Delta _0/3}$ and $u_0(\beta )\ge t_i$ and $|\varphi _t(\partial \beta )|<e^{-\Delta _0/p}$ for $|t-u_0(\beta )|\le \Delta _0/p$. and $\varphi _{t}(\omega _j(\beta )\setminus \omega _{j+1}(\beta ))\cap \chi _t(\zeta _i)\ne \emptyset $, (for one, hence any, $t$). Let $B_j^1$ be the set of all such segments $\gamma $ such that $\varphi _{t_i}(\gamma )$ is not adjacent to a segment of $\chi _{t_i}(\zeta _i)$ which is outside $\varphi _{t_i}(\omega _j(\beta )\setminus \omega _{j+1}(\beta ))$ and $\mu _i^{-1}$ times longer. Let 
$$B_j^2=B_j\setminus B_j^1,\ \ \ B^1=\cup _jB_j^1,\ \ \ B^2=\cup _jB_j^2,\ \ B=\cup _jB_j= B^1\cup B^2.$$
By the properties of the sets $\omega _j(\beta )\setminus \omega _{j+1}(\beta )$ described in \ref{7.11}, the number of elements of $B_j^1$ is bounded by the number of different $(\beta ,j)$, which, by \ref{7.15}, is $N_j$, where $N_j$ depends only on $\mu _{j'}$ for $j'<j$. So, by choice of $\mu _j$, we can take $N_j\mu _j^{1/2}$ as small as we like. By Property 5, we have, for any $t$,
$$\sum _{\gamma \in B^1}|\varphi _{t}(\gamma )\cap \chi _{t}(\zeta _i)|_{t}\le \sum _j\mu _j^{1/2}N_j\cdot |\chi _{t}(\zeta _i)|_{t}.$$
By the properties of the set $\omega _j(\beta )\setminus \omega _{j+1}(\beta )$ of \ref{7.11} we have 
$$\sum _{\gamma \in B^2}|\varphi _{t}(\gamma )\cap \chi _{t}(\zeta _i)|_{t}\le \sum _j\mu _jN_j\cdot |\chi _{t}(\zeta _i)|_{t}$$
We have
$$\chi _t(\zeta _i)\subset (\cup _{\gamma \in B}\varphi _t(B))\cup (\cup _{\zeta \in A}\chi _t(\zeta ))$$
(for any $t$) and hence, since segments in $A$ have disjoint interiors, we have
$$\sum _{\zeta \in A}|\chi _t(\zeta )|_t\ge |\chi _t(\zeta _i)|_t-\sum _{j\in B_j}|\varphi _t(\gamma )\ge \frac{1}{2} |\chi _t(\zeta _i)|_t.$$
assuming  that $\mu _j$ is sufficiently small given $N_j$, for each $j$, that
$$\sum _jN_j\mu _j^{1/2}<\frac{1}{4}.$$
Now for $A$ as above, let 
$$A'=\{ \zeta \in A,\gamma \in B^1: \chi _{t_i}(\zeta )\cap \varphi _{t_i}(\gamma )\ne \emptyset \} $$
then assuming that $e^{-\Delta _0/4}\le \mu _j^{1/2}$ for all $j$ and $t$,
$$\sum _{ \zeta \in A' }|\chi _{t}(\zeta )|_{t}\le 2 \sum _{\gamma \in B^1}|\varphi _t(\gamma )|_t$$
$$\le 2\sum _jN_j\mu _j^{1/2}\cdot |\chi _t(\zeta _i)|_t.$$
For any $K>0$, if 
$$A^{K,j}=\{ \zeta \in A:|\chi _{t}(\zeta )\cap \varphi _{t}(\gamma )|_{t}\ge K|\chi _{t}(\zeta )|_{t}{\rm{\ for\ some\ }}\gamma \in B_j^2\} $$
then 
$$\sum _{\zeta \in A^{K,j}}|\chi _{t}(\zeta )|_{t}\le K^{-1}\sum _{\zeta \in A^{K,j}}|\chi _{t}(\zeta )\cap (\cup _{\gamma \in B_j^2} \varphi _{t}(\gamma ))|_{t}$$
$$\le K^{-1}\sum _{\gamma \in B_j^2}\varphi _{t}(\gamma )|_{t}\le K^{-1}N_j\mu _j|\chi _t(\zeta )|_t.$$
Now put $A^j=A^{K,j}$ for $K=\mu _j^{1/2}/M$, for $M$ as in \ref{7.17}. We have 
$$\sum _{\zeta \in A^{j}}|\chi _{t}(\zeta )|_{t}\le N_jM\mu _j^{1/2}|\chi _t(\zeta _i)|_t.$$
So 
$$\sum _j\sum _{\zeta \in A^{j}}|\chi _{t}(\zeta )|_{t}+\sum _{ \zeta \in A' }|\chi _{t}(\zeta )|_{t}\le (2+M)\sum _jN_j\mu _j^{1/2}\cdot |\chi _t(\zeta _i)|_t$$
$$\le 2(2+M)\sum _jN_j\mu _j^{1/2}\cdot\sum _{\zeta \in A}|\chi _t(\zeta )|_t.$$
So assuming that $\mu _j$ is big enough given $N_j$ for each $j$, and given $M$, so that
$$ 4(2+M)\sum _jN_j\mu _j^{1/2}<1,$$
we have 
$$A\setminus (A'\cup _jA^j)\ne \emptyset .$$
We choose any $\zeta \in A\setminus (A'\cup _jA^j)$ and, using the definition of $A$, let $(\alpha ,\ell )$ be any ltd with $\ell \subset [y_{t_i+\Delta _0/2},y_{t_i+\Delta _0}]$ and let $\zeta '$ be such that $\chi _t(\zeta ')\subset \chi _t(\zeta )\cap \varphi _t(\alpha )$ for any $t$  and $\chi _{t}(\zeta ' )$ has Poincar\'e length which is boundedly proportional to the injectivity radius for some $y_t=[\varphi _t]\in \ell $. By the definition of segments of $A$, such an $(\alpha ,\ell )$ does exist. Then, since $\zeta \notin A'$,  
$$\chi _t(\zeta ')\cap \varphi _t(\gamma )=\emptyset{\rm{\ for\ all\ }}\gamma \in B^1,$$
 and since $\zeta \notin A^j$ for any $j$, we have, for all $\gamma \in B_j^2$, for any $j$, and any $t$
 $$|\chi _t(\zeta ')\cap\varphi _t(\gamma )|_t\le \sqrt{\mu _j}|\chi _t(\zeta ')|_t.$$
 Then we take $\alpha =\alpha _{i+1}$, $\ell =\ell _{i+1}$ and $\zeta _{i+1}=\zeta '$, and choose $t_{i+1}=t$ so that $y_t\in \ell $ and $\chi _{t}(\zeta ' )$ has Poincar\'e length which is boundedly proportional to the injectivity radius.  Then all the required properties hold for $(\alpha _{i+1},\ell _{i+1})$, $t_{i+1}$ and $\zeta _{i+1}$, including Property 5.
\Box

\end{document}